\documentclass[a4paper,12pt]{article}
\usepackage[T1]{fontenc}
\usepackage{mathrsfs}
\usepackage{graphicx}
\usepackage{enumerate}
\usepackage{multicol}
\usepackage{color}
\usepackage{amsmath,amssymb}
\usepackage{amsthm}
\usepackage{textcomp}
\usepackage{mathabx}

\usepackage{hyperref}

\usepackage{mathpazo}

\usepackage[top=1in, bottom=1in, left=0.75in, right=0.75in]{geometry}

\theoremstyle{plain}
\newtheorem{thm}{Theorem}
\newtheorem{lem}{Lemma}
\newtheorem{cor}{Corollary}
\newtheorem{prop}{Proposition}

\theoremstyle{definition}
\newtheorem{dfn}{Definition}

\theoremstyle{remark}
\newtheorem*{rem}{Remark}
\newtheorem*{opp}{Open Problem}
\newtheorem*{ackn}{Acknowledgment}

\title{Perturbations of Cauchy differences}
\author{Eszter Gselmann, Tomasz Małolepszy and Janusz Matkowski}

\begin{document}
\nocite{*}

\maketitle

\begin{abstract}
 This paper investigates functional equations arising from perturbations of Cauchy differences. We study equations of the form
 \[
f(x+y)-f(x)-f(y)=B(x,y) \quad \text{or} \quad f(xy)-f(x)f(y) = B(x,y)
\]
where $B$ is a biadditive mapping, and also more general cases where the inhomogeneity depends on unknown functions
\begin{align*}
 f(x+y)-f(x)-f(y)&= \alpha x y \\[2.5mm]
 f(x+y)-f(x)-f(y)&= \alpha (x y)\\[2.5mm]
 f(x+y)-f(x)-f(y)&= \alpha(x)\alpha(y).   
\end{align*}
Our results extend previous work on the bilinearity of the Cauchy exponential difference by Alzer and Matkowski. We characterize solutions under various structural and regularity assumptions, including additive and exponential Cauchy differences, and show that solutions often reduce to additive functions, exponential polynomials, or combinations thereof. For Levi-Civita type equations, we provide explicit representations of solutions in terms of additive and exponential components. Furthermore, we determine conditions under which real-valued solutions exist and describe their forms. The paper concludes with open problems concerning generalized equations that cannot be solved by the methods presented here, suggesting directions for future research.
\end{abstract}

\section{Introduction}

The recent paper \cite{AlzMat24} investigates a functional equation that represents a bilinear perturbation of Cauchy’s classical exponential equation. 
The authors consider a modified form where the Cauchy difference equals a bilinear term
\[
f(x)f(y) - f(x+y) = \alpha xy 
\]
for the function $f\colon \mathbb{R}\to \mathbb{R}$, where $\alpha$ is a given nonzero real number. 

The paper characterizes all solutions of this equation under mild regularity assumptions. If \(f \colon \mathbb{R} \to \mathbb{R}\) satisfies the above equation and is differentiable at least at one point, then the only solutions are functions of the form
\[
f(x) = 1 \pm \sqrt{\alpha}\, x,
\]
where \(\alpha > 0\). The authors also formulate a conjecture stating that every solution of the equation must have a zero. 
Further, \cite{AlzMat24} examines related functional equations such as
\[
f(x) + f(y) - f(x+y) = \alpha xy,
\]
and shows that these can be reduced to additive or Pexider-type equations. Continuous solutions are described using additive functions. The methods employed include classical techniques from the theory of functional equations, differentiation, and reduction to simpler forms.

The paper \cite{FecPieSme25} addresses a conjecture recently posed by Horst Alzer and Janusz Matkowski in \cite{AlzMat24} concerning a bilinearity property of the Cauchy exponential difference. In \cite{AlzMat24} it was formulated as an open question whether every solution \(f \colon \mathbb{R} \to \mathbb{R}\) of the above equation has a zero.

The authors of the paper \cite{FecPieSme25} provide an affirmative answer to this conjecture: every solution indeed has a zero. Furthermore, they extend the problem to a more general setting by considering mappings on a linear space \(X\) over \(\mathbb{K} \in \{\mathbb{R}, \mathbb{C}\}\) and a biadditive functional \(\varphi \colon  X \times X \to \mathbb{K}\). The generalized equation takes the form
\[
f(x+y) = f(x)f(y) - \varphi(x,y), \quad x,y \in X.
\]
The paper characterizes solutions under various assumptions on \(\varphi\), proving that the structure of solutions depends on the behavior of the biadditive functional. In particular, they show that if \(\varphi(z_0,z_0) \neq 0\) for some \(z_0 \in X\), then solutions are of the form
\[
f(x) = a \varphi(x,z_0) + 1,
\]
where \(a\) satisfies certain quadratic conditions involving \(\varphi\). The authors also provide corollaries describing when no solutions exist, depending on sign conditions on \(\varphi\).

This work not only resolves the original conjecture but also opens new directions by generalizing the problem to linear spaces and biadditive perturbations. It contributes to the theory of functional equations, particularly those involving bilinear modifications of exponential-type equations.

Several classical papers provide the foundation for the study of Cauchy differences and their generalizations. \cite{BarVol91} revisited van der Corput’s theorem in the context of functional equations, focusing on additive and modular structures. Their work established conditions for solutions of Cauchy-type equations and inspired later research on bilinear perturbations.
The paper \cite{BarKan93} studied the Cauchy differences
and characterized solutions under various constraints. 
In \cite{BarKom03} Karol Baron and Zygfryd Kominek studied functionals where the Cauchy difference is bounded by a homogeneous functional. Their results highlight structural restrictions on solutions, which parallels our approach of imposing biadditivity conditions on the perturbation term.
Finally, in the paper \cite{EbaKanSah92}, the authors examined Cauchy differences depending on the product of arguments. This idea anticipates the multiplicative and Levi-Civita type equations treated in our manuscript, where the inhomogeneity involves products or exponential components.

The recent manuscipt \cite{MalMat25} investigates when real Cauchy differences can be expressed as biadditive functions, that is, as products of two single-variable additive functions. The paper considers four types of Cauchy differences: exponential, additive, logarithmic, and multiplicative, and examines conditions under which such representations exist.
For the exponential Cauchy difference, the authors improve earlier results by removing differentiability assumptions. In the case of additive Cauchy differences, they provide explicit formulas for solutions, which involve additive functions and quadratic terms in $\alpha(x)$. For logarithmic Cauchy differences, the paper proves that no solutions exist either on the real line or on the positive half-line.

In this paper, we aim to contribute to the abovementioned topics under more general conditions compared to previous works. In the results presented in the second chapter, the domain of definition is a group, in some cases a semigroup, while the range is a field, in certain cases the field of complex numbers or the field of real numbers.

At first we consider functions $f\colon S\to \mathbb{F}$ that fulfill 
\[
 f(x+y)-f(x)-f(y)= B(x, y)
\]
for all $x, y\in S$, where $B\colon S\times S\to \mathbb{F}$ is a given biadditive function, and also the related equations
\begin{align*}
 f(x+y)-f(x)-f(y)&= \alpha x y \\[2.5mm]
 f(x+y)-f(x)-f(y)&= \alpha (x y)\\[2.5mm]
 f(x+y)-f(x)-f(y)&= \alpha(x)(y),  
\end{align*}
respectively. 
In the case of the latter two equations, both the functions $f$ and $\alpha$ are unknown functions. 
Further, not only 'additive' Cauchy differences, but also 'exponential' Cauchy differences as well as  the related equations are studied. 

\subsection*{Prerequisites related to the Levi-Civita equation}

In this subsection, we recall the most important concepts and results related to exponential polynomials, with particular attention to the Levi-Civita equation and its solutions. In presenting these concepts and results, we will follow László Székelyhidi’s monograph \cite{Sze91}.

In what follows, $(G, \cdot)$ is assumed to be  a commutative group (written multiplicatively).
Further, $\mathbb{N}$ wil denote the set of positive integers, $\mathbb{C}$ will stand for the set of complex numbers, and $\mathbb{R}$ denotes the set of real numbers. 

\begin{dfn}
 The function $a\colon G\to \mathbb{C}$ is called \emph{additive} if 
 \[
  a(x\cdot y)= a(x)+a(y)
 \]
holds for all $x, y\in G$. 
\end{dfn}

\begin{dfn}
{\it Polynomials} are elements of the algebra generated by additive
functions over $G$. More exactely, if $n$ is a positive integer,
$P\colon\mathbb{C}^{n}\to \mathbb{C}$ is a (classical) complex
polynomial in
 $n$ variables and $a_{k}\colon G\to \mathbb{C}\; (k=1, \ldots, n)$ are additive functions, then the function
 \[
  x\longmapsto P(a_{1}(x), \ldots, a_{n}(x))
 \]
is a polynomial and, also conversely, every polynomial can be
represented in such a form.
\end{dfn}

\begin{rem}
 We recall that the elements of $\mathbb{N}^{n}$ for any positive integer $n$ are called
 ($n$-dimensional) \emph{multi-indices}.
 Addition, multiplication and inequalities between multi-indices of the same dimension are defined component-wise.
 Further, we define $x^{\alpha}$ for any $n$-dimensional multi-index $\alpha$ and for any
 $x=(x_{1}, \ldots, x_{n})$ in $\mathbb{C}^{n}$ by
 \[
  x^{\alpha}=\prod_{i=1}^{n}x_{i}^{\alpha_{i}}
 \]
where we always adopt the convention $0^{0}=0$. We also use the
notation $\left|\alpha\right|= \alpha_{1}+\cdots+\alpha_{n}$. With
these notations any polynomial of degree at most $N$ on the
commutative semigroup $G$ has the form
\[
 p(x)= \sum_{\left|\alpha\right|\leq N}c_{\alpha}a(x)^{\alpha}
 \qquad
 \left(x\in G\right),
\]
where $c_{\alpha}\in \mathbb{C}$ and $a=(a_1, \dots, a_n) \colon
G\to \mathbb{C}^{n}$ is an additive function. Furthermore, the
\emph{homogeneous term of degree $k$} of $p$ is
\[
 \sum_{\left|\alpha\right|=k}c_{\alpha}a(x)^{\alpha} .
\]
\end{rem}

\begin{lem}[Lemma 2.7 of \cite{Sze91}]\label{L_lin_dep}
 Let $G$ be a commutative group,
 $n$ be a positive integer and let
 \[
  a=\left(a_{1}, \ldots, a_{n}\right),
 \]
where $a_{1}, \ldots, a_{n}$ are linearly independent complex-valued
additive functions defined on $G$. Then the monomials
$\left\{a^{\alpha}\right\}$ for different multi-indices are linearly
independent.
\end{lem}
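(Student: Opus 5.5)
Assume a nontrivial linear relation
\[
\sum_{\alpha\in F} c_\alpha\, a(x)^\alpha = 0 \qquad (x\in G)
\]
over a finite set $F$ of multi-indices with all $c_\alpha\neq 0$. The aim is a contradiction, which I would obtain by induction on the maximal total degree $N=\max_{\alpha\in F}|\alpha|$. The case $N=0$ is trivial: the relation reads $c_0=0$.

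\emph{Reduction to the top-degree part.} Replace $x$ by $x\cdot y^{k}$ for $y\in G$ and $k\in\mathbb{N}$. By additivity, $a_i(x\cdot y^k)=a_i(x)+k\,a_i(y)$, so
\[
\sum_{\alpha\in F} c_\alpha\,\bigl(a(x)+k\,a(y)\bigr)^\alpha = 0 .
\]
For fixed $x,y$ the left-hand side is a polynomial in the integer variable $k$ of degree at most $N$, and it vanishes for all $k\in\mathbb{N}$. Hence each of its coefficients in $k$ vanishes. The coefficient of $k^N$ gives
\[
\sum_{|\alpha|=N} c_\alpha\, a(y)^\alpha = 0 \qquad (y\in G).
\]
Thus the homogeneous part of degree $N$ vanishes identically, and we may assume the relation is homogeneous of degree $N\ge 1$.

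\emph{Homogeneous case.} Substitute $x\cdot y$ and use additivity again. The coefficient extraction above applies to $x\cdot y^k$, and the $k^1$ coefficient gives
\[
\sum_{|\alpha|=N} c_\alpha \sum_{i} \alpha_i\, a(x)^{\alpha-e_i}\,a_i(y)=0 .
\]
Rewrite this as
\[
\sum_{i=1}^n a_i(y)\, Q_i(x)=0, \qquad Q_i(x)=\sum_{\alpha}c_\alpha\alpha_i\, a(x)^{\alpha-e_i}.
\]
For fixed $x$ this is a linear combination of $a_1,\dots,a_n$ vanishing identically in $y$. Linear independence of the $a_i$ then gives $Q_i(x)=0$ for every $x$ and every $i$.

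Each $Q_i$ is a relation among monomials of degree $N-1$. Choose $i$ with $\alpha_i\ge1$ for some $\alpha\in F$, which is possible since $N\ge1$. Distinct such $\alpha$ give distinct $\alpha-e_i$, so $Q_i$ is a relation with nonzero coefficients $c_\alpha\alpha_i$. This contradicts the induction hypothesis, and the proof is complete.

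The main point needing care is the $k$-polynomial coefficient extraction. A polynomial in $k$ over $\mathbb{C}$ with infinitely many roots is zero, so this step is fine. One should also handle the convention $0^0=0$ stated in the Remark. I interpret it as saying that $x^\alpha$ simply omits the factors with $\alpha_i=0$, so $a(x)^0=1$ for the constant monomial. Under this interpretation the argument goes through unchanged.
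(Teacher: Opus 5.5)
Your proof is correct. The paper gives no proof of this lemma of its own; it is quoted from Sz\'ekelyhidi's monograph, so there is no internal argument to compare with, and yours is a valid self-contained proof.

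The key steps are sound:
\begin{itemize}
\item the $k^{1}$ coefficient of $k\mapsto\sum_\alpha c_\alpha\bigl(a(x)+k\,a(y)\bigr)^{\alpha}$ is $\sum_i a_i(y)Q_i(x)$;
\item linear independence in the variable $y$ forces every $Q_i\equiv 0$;
\item $\alpha\mapsto\alpha-e_i$ is injective and $c_\alpha\alpha_i\neq 0$ in characteristic zero, so $Q_i$ is a nontrivial relation of degree $N-1$.
\end{itemize}
The homogenization step is harmless but not needed. Applying the same $k^{1}$ extraction to the original relation, with $i$ chosen so that $\alpha_i\ge 1$ for some $\alpha$ of top degree, already gives a nontrivial relation of degree exactly $N-1$.

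Your reading of the conventions is also right. The paper's ``$0^0=0$'' can only be a misprint for $0^0=1$. Otherwise $a^{0}$ would vanish at the identity, where every additive $a_i$ vanishes, but equal $1$ elsewhere. Multi-indices must also be allowed zero entries, even though the paper writes $\mathbb{N}^n$ with $\mathbb{N}$ the positive integers.

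For comparison, a standard alternative avoids the induction on degree:
\begin{itemize}
\item Linear independence of $a_1,\dots,a_n$ gives points $y_1,\dots,y_n\in G$ with $M=\bigl(a_i(y_j)\bigr)$ invertible.
\item Then $a(y_1^{k_1}\cdots y_n^{k_n})=Mk$ for $k\in\mathbb{N}^n$.
\item So the classical polynomial $z\mapsto\sum_\alpha c_\alpha (Mz)^\alpha$ vanishes on $\mathbb{N}^n$, hence on $\mathbb{C}^n$.
\item Composing with $M^{-1}$ gives $c_\alpha=0$ for all $\alpha$ at once.
\end{itemize}
That route needs the multivariable vanishing lemma and the evaluation-matrix fact. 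In exchange it makes transparent that the lemma just says $a(G)$ is Zariski dense in $\mathbb{C}^n$. Your route uses linear independence directly, as a functional identity in $y$. It needs only the one-variable fact that a polynomial with infinitely many roots is zero. Both arguments work unchanged on commutative semigroups.
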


\begin{dfn}
A function $m\colon G\to \mathbb{C}$ is called an \emph{exponential}
function if it satisfies
\[
 m(x\cdot y)=m(x)m(y)
 \qquad
 \left(x,y\in G\right).
\]
Furthermore, on an  \emph{exponential polynomial} we mean a linear
combination of functions of the form $p \cdot m$, where $p$ is a
polynomial and $m$ is an exponential function.
\end{dfn}

\begin{dfn}
 Let $G$ be an Abelian group and $V\subseteq \mathbb{C}^G$ a set of complex-valued functions defined on $G$. We say that $V$ is \emph{translation-invariant} if for every $f\in V$ and for all $g\in G$, we also have $\tau_{g}f\in V$,   where
 \[
  \tau_{g}f(h)= f(hg)
  \qquad
  \left(h\in G\right).
 \]
 \end{dfn}

 In view of Theorem 10.1 of Székelyhidi \cite{Sze91}, any finite dimensional translation-invariant linear 
 space of complex-valued functions on a commutative group consists of exponential polynomials. 
 This implies that if $G$ is a commutative group, then any function 
 $f\colon G\to \mathbb{C}$, satisfying the functional equation 
 \[
  f(x\cdot y)= \sum_{i=1}^{n}g_{i}(x)h_{i}(y) 
  \qquad 
  \left(x, y\in G\right)
 \]
for some positive integer $n$ and functions $g_{i}, h_{i}\colon G\to \mathbb{C}$ ($i=1, \ldots, n$), 
is an exponential polynomial of degree at most $n$. 

If the functions $h_{1}, \ldots, h_{n}$ are linearly independent, then $g_{1}, \ldots, g_{n}$ are linear combinations of translates of $f$, hence they are exponential polynomials of order at most $n$, too. Moreover, they are built up from the same additive and exponential functions, as $f$. If either the functions $g_{1}, \ldots, g_{n}$, or the functions $h_{1}, \ldots, h_{n}$ are linearly dependent, then the number $n$ in the above representation can be reduced. We will call this case \emph{degenerate}. 

Summing up, in the \emph{nondegenerate} case, all the functions $f$, $g_{1}, \ldots, g_{n}$ and $h_{1}, \ldots, h_{n}$ are exponential polynomials, that is, we have 
\begin{align*}
 f(x)&= \sum_{j=1}^{k}P_{j}(a_{j, 1}(x), \ldots, a_{j, n_{j}-1}(x)) m_{j}(x)& \qquad 
 \left(x\in G\right)\\
 g_{i}(x)&= \sum_{j=1}^{k}Q_{i, j}(a_{j, 1}(x), \ldots, a_{j, n_{j}-1}(x)) m_{j}(x)&\qquad 
 \left(x\in G, i=1, \ldots, n\right)\\
 h_{i}(x)&= \sum_{j=1}^{k}R_{i, j}(a_{j, 1}(x), \ldots, a_{j, n_{j}-1}(x)) m_{j}(x)& 
 \qquad 
 \left(x\in G, i=1, \ldots, n\right), 
\end{align*}
where $k, n_{1}, \ldots, n_{k}$ are positive integers, $m_{1}, \ldots, m_{k}$ are different complex-valued exponentials, $\{a_{j, 1}, \ldots, a_{j, n_{j}-1} \}$ are sets of linearly independent additive functions for $j=1, \ldots, k$, $P_{j}, Q_{i, j}, R_{i, j}\colon \mathbb{C}^{n_{j}-1}\to \mathbb{C}$ are complex polynomials for $i=1, \ldots, n$ and $j=1, \ldots, k$ such that we have $n_{1}+\cdots+n_{k}=n$. Of course, the polynomials  $P_{j}, Q_{i, j}, R_{i, j}$ must satisfy a lot of extra conditions in order to form a solution for the equation in question.

\section{Results}

\subsection{Additive Cauchy differences}

\begin{prop}\label{prop_biadditive}
 Let $(S, +)$ be a semigroup, $(\mathbb{F},+, \cdot, )$ be a field, and $B\colon S\times S\to \mathbb{F}$ be a symmetric and biadditive function. If a function $f\colon S\to \mathbb{F}$ satisfies equation 
 \begin{equation}\label{biadditive}
  f(x+y)-f(x)-f(y)= B(x, y)
 \end{equation}
for all $x, y\in S$, then exists an additive function $a\colon S\to \mathbb{F}$ i.e., a function for which 
\[
 a(x+y)= a(x)+a(y)
\]
holds for all $x, y\in S$ such that 
\[
 f(x)= \frac{1}{2}B(x, x)+a(x)
\]
for all $x\in S$. 
\end{prop}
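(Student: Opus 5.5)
The plan is to guess the quadratic part directly and check that what is left over is additive. Since the statement already writes $\frac{1}{2}$, we work under the implicit assumption that $\operatorname{char}\mathbb{F}\neq 2$, so that $2$ is invertible in $\mathbb{F}$. Define
\[
 a(x) := f(x)-\tfrac{1}{2}B(x,x) \qquad (x\in S).
\]
Then $f(x)=\frac{1}{2}B(x,x)+a(x)$ holds by construction. The whole task reduces to showing that $a$ is additive on $S$.

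First, expand $B(x+y,x+y)$ using biadditivity in each variable separately:
\[
 B(x+y,x+y)=B(x,x)+B(x,y)+B(y,x)+B(y,y).
\]
Only additivity of $B(\cdot,z)$ and $B(z,\cdot)$ along the semigroup operation is used here. No inverses or commutativity of $S$ are needed. By symmetry of $B$, this becomes $B(x,x)+2B(x,y)+B(y,y)$.

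Next, compute the Cauchy difference of $a$:
\[
 a(x+y)-a(x)-a(y)=\bigl[f(x+y)-f(x)-f(y)\bigr]-\tfrac{1}{2}\bigl[B(x+y,x+y)-B(x,x)-B(y,y)\bigr].
\]
By \eqref{biadditive} the first bracket equals $B(x,y)$. By the expansion above, the second bracket equals $2B(x,y)$. Hence the difference is $B(x,y)-B(x,y)=0$ for all $x,y\in S$, so $a$ is additive.

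There is no real obstacle in this argument. The only delicate points are the following.
\begin{itemize}
 \item Symmetry of $B$ is genuinely needed. Without it the cross term would be $\frac{1}{2}\bigl(B(x,y)+B(y,x)\bigr)$, which need not equal $B(x,y)$.
 \item Division by $2$ must be legitimate. If $\operatorname{char}\mathbb{F}=2$, the formula in the statement is meaningless, and the claim has to be read under the tacit assumption $\operatorname{char}\mathbb{F}\neq 2$.
\end{itemize}
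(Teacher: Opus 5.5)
Your proof is correct and follows the paper's argument: the paper also sets $a(x)=f(x)-\frac{1}{2}B(x,x)$ and uses the identity $B(x,y)=\frac{1}{2}\left[B(x+y,x+y)-B(x,x)-B(y,y)\right]$, which is your biadditive expansion combined with symmetry, to conclude that $a$ is additive. Your remark that $\operatorname{char}\mathbb{F}\neq 2$ must be tacitly assumed is accurate, since the paper uses the factor $\frac{1}{2}$ without stating this.
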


\begin{proof}
 Since the mapping $B\colon S\times S\to \mathbb{F}$ is symmetric and biadditive, we have 
 \[
  B(x, y)= \frac{1}{2}\left[B(x+y, x+y)-B(x, x)-B(y, y)\right]
 \]
for all $x, y\in S$. Using this, equation \eqref{biadditive} can be rewritten as 
\[
 f(x+y)-f(x)-f(y)= \frac{1}{2}\left[B(x+y, x+y)-B(x, x)-B(y, y)\right] 
 \qquad 
 \left(x, y\in S\right). 
\]
Define the function $a\colon S\to \mathbb{F}$ as 
\[
 a(x)= f(x)-\frac{1}{2}B(x, x) 
 \qquad 
 \left(x\in S\right), 
\]
to deduce that 
\[
 a(x+y)=a(x)+a(y)
\]
holds for all $x, y\in S$. So the function $a$ is additive. Thus the function $f$ can be written as 
\[
 f(x)= \frac{1}{2}B(x, x)+a(x) 
 \qquad 
 \left(x\in S\right)
\]
with the aid of an additive function $a\colon S\to \mathbb{F}$. 
\end{proof}

\begin{cor}\label{cor1}
 Let $(\mathbb{F},+, \cdot)$ be a field such that 
 $(S, +)$ is a subsemigroup of the group $(\mathbb{F}, +)$ and let $\alpha \in \mathbb{F}$ be arbitrarily fixed. If a function $f\colon S\to \mathbb{F}$ satisfies equation 
 \begin{equation}\label{additive_difference}
  f(x+y)-f(x)-f(y)= \alpha xy
 \end{equation}
for all $x, y\in S$, then there exists an additive function $a\colon S\to \mathbb{F}$ such that 
\[
 f(x)= \frac{\alpha}{2}x^{2}+a(x)
\]
holds for all $x\in S$. 
\end{cor}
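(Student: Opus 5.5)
The plan is to apply Proposition~\ref{prop_biadditive} directly, with the perturbation $B\colon S\times S\to \mathbb{F}$ defined by
\[
 B(x,y)= \alpha xy \qquad \left(x, y\in S\right),
\]
where the product $xy$ is taken in the field $\mathbb{F}$. This makes sense because $S\subseteq \mathbb{F}$.

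First I check that $B$ meets the hypotheses of the proposition. Symmetry follows from the commutativity of multiplication in $\mathbb{F}$. Biadditivity follows from distributivity: for all $x,y,z\in S$ we have
\[
 \alpha(x+y)z=\alpha xz+\alpha yz,
\]
and similarly in the second variable. Here $x+y$ is the same element whether the sum is computed in $S$ or in $\mathbb{F}$, since $(S,+)$ is a subsemigroup of $(\mathbb{F},+)$. With this choice of $B$, equation \eqref{additive_difference} is exactly equation \eqref{biadditive}.

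Proposition~\ref{prop_biadditive} then gives an additive function $a\colon S\to\mathbb{F}$ with
\[
 f(x)=\tfrac12 B(x,x)+a(x)=\tfrac{\alpha}{2}x^{2}+a(x) \qquad \left(x\in S\right),
\]
which is the claimed representation.

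The only delicate point is the division by $2$, which is implicit in both the proposition and the corollary. If $\operatorname{char}\mathbb{F}=2$, the factor $\tfrac{\alpha}{2}$ is meaningless unless $\alpha=0$. So I would read the statement, like the proposition it relies on, under the tacit assumption $\operatorname{char}\mathbb{F}\neq 2$ (or $\alpha=0$). Under that assumption the argument above is complete; no other obstacle arises.
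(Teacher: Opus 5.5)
Your proposal is correct and follows exactly the paper's route: the paper obtains Corollary~\ref{cor1} as an immediate specialization of Proposition~\ref{prop_biadditive} with $B(x,y)=\alpha xy$, which is symmetric and biadditive by commutativity and distributivity in $\mathbb{F}$. Your remark that the factor $\tfrac{1}{2}$ tacitly requires $\operatorname{char}\mathbb{F}\neq 2$ is fair, since the paper leaves this assumption implicit in both the proposition and the corollary.
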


\begin{cor}\label{lem_exp_difference}
 Let $(\mathbb{F},+, \cdot, )$ be a field such that 
 $(S, \cdot)$ is a subsemigroup of the semigroup $(\mathbb{F}, \cdot)$ and let $\alpha \in \mathbb{F}$ be arbitrarily fixed. If a function $f\colon S\to \mathbb{F}$ satisfies equation 
 \begin{equation}\label{exp_difference}
  f(xy)-f(x)-f(y)= \alpha xy
 \end{equation}
for all $x, y\in S$, then necessarily $\alpha=0$ and the function $f\colon S\to \mathbb{F}$ is additive, i.e., 
\[
 f(x y)= f(x)+f(y)
\]
holds for all $x, y\in S$. 
\end{cor}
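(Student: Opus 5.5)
The plan is to exploit associativity of the multiplication in $S$, not any biadditivity: the right-hand side $\alpha xy$ of \eqref{exp_difference} is not biadditive with respect to the operation $\cdot$ of $S$, so Proposition~\ref{prop_biadditive} does not apply directly. Instead I will compute $f(xyz)$ in two ways and compare.

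For $x,y,z\in S$, first apply \eqref{exp_difference} to the pair $(xy,z)$ and then to $(x,y)$:
\[
 f((xy)z)=f(xy)+f(z)+\alpha xyz = f(x)+f(y)+f(z)+\alpha xy+\alpha xyz .
\]
Next apply it to the pair $(x,yz)$ and then to $(y,z)$:
\[
 f(x(yz))=f(x)+f(yz)+\alpha xyz = f(x)+f(y)+f(z)+\alpha yz+\alpha xyz .
\]
Since $(xy)z=x(yz)$, subtracting gives
\[
 \alpha\, y\,(x-z)=0 \qquad (x,y,z\in S).
\]

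To conclude $\alpha=0$, I need some $y\in S$ with $y\neq 0$ and two distinct elements $x\neq z$ of $S$. Then $y(x-z)\neq 0$ in the field $\mathbb{F}$, which forces $\alpha=0$. Once $\alpha=0$, equation \eqref{exp_difference} reads $f(xy)=f(x)+f(y)$, which is exactly the claimed additivity on $(S,\cdot)$.

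The only real obstacle is this nondegeneracy requirement, which holds whenever $S$ has at least two elements. Indeed, if $S$ has two elements, one of them is nonzero and serves as $y$, and the two elements serve as $x$ and $z$. For instance, for $S=\{0,1\}$ take $y=x=1$ and $z=0$.

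The degenerate case must be set aside explicitly:
\begin{itemize}
 \item For $S=\{1\}$, the equation becomes $-f(1)=\alpha$, so any $\alpha$ occurs with $f(1)=-\alpha$. The statement is then false as written.
 \item For $S=\{0\}$, the term $\alpha xy$ vanishes, so $\alpha$ is irrelevant.
\end{itemize}
So the proof will note that $S$ is assumed to have at least two elements, which is implicitly intended, and then run the associativity argument above.
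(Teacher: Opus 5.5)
Your argument is correct and is essentially the paper's proof: computing $f(xyz)$ in two ways is exactly the cocycle identity for the Cauchy difference $\mathscr{C}_{f}$ that the paper uses, and it yields the same relation $\alpha y(x-z)=0$ for all $x,y,z\in S$. Your caveat about the last step is justified: the paper's ``as $S\subset\mathbb{F}$'' tacitly assumes that $S$ has at least two elements, and for the one-element subsemigroups $\{0\}$ and $\{1\}$ the conclusion $\alpha=0$ does indeed fail.
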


\begin{proof}
 Under the assumptions of the proposition, let us consider the function $\mathscr{C}_{f}\colon S\times S\to \mathbb{F}$ defined by 
 \[
\mathscr{C}_{f}(x, y)= f(xy)-f(x)-f(y)  
\qquad 
\left(x, y\in S\right). 
 \]
 As $\mathscr{C}_{f}$ is a Cauchy difference generated by the function $f$, it fulfills the cocycle equation on $S\times S$, i.e., we have 
 \[
  \mathscr{C}_{f}(x, y)+\mathscr{C}_{f}(xy, z)= \mathscr{C}_{f}(x, yz)+\mathscr{C}_{f}(y, z) 
  \qquad 
  \left(x, y, z\in S\right). 
 \]
Due to equation \eqref{exp_difference}, we obtain that the function $\mathscr{B}\colon S\times S\to \mathbb{F}$, defined by 
\[
 \mathscr{B}(x, y)= \alpha xy 
 \qquad 
 \left(x, y\in S\right)
\]
also satisfies the cocycle equation, since we have $\mathscr{C}_{f}\equiv \mathscr{B}$. This however means that 
\[
 \alpha xy+\alpha xyz= \alpha xyz+\alpha yz
\]
should hold for all $x, y, z\in S$. From which we get that 
\[
 \alpha y (x-z)=0 
\]
for all $x, y, z\in S$. As $S \subset \mathbb{F}$, we obtain that the above identity can hold for all $x, y, z \in S$ only if $\alpha=0$. 
\end{proof}

\begin{rem}
 Note that in Proposition \ref{prop_biadditive} and also in Lemma \ref{lem_exp_difference}, it is sufficient for 
$S$ to be merely a semigroup; $S$ does not necessarily have to be a group. For example, if we choose $S=]0, +\infty[$ and the semigroup operation is either addition or multiplication, we can similarly obtain the solutions to the above equations. Moreover, it immediately follows that every solution can be uniquely extended to the entire real line.
\end{rem}

Taking $(S, +)= (\mathbb{R}, +)$ and $(\mathbb{F},+, \cdot)= (\mathbb{R},+, \cdot )$, the corollary below follows immediately. 

\begin{cor}\label{cor_add}
 Let $\alpha\in \mathbb{R}$ be fixed. If a function $f\colon \mathbb{R}\to \mathbb{R}$ satisfies equation 
 \begin{equation}\label{eq_add}
  f(x+y)-f(x)-f(y)= \alpha xy
  \qquad 
  (x, y \in \mathbb{R}), 
 \end{equation}
 then there exists an additive function $a\colon \mathbb{R}\to \mathbb{R}$ i.e., a function $a$ for which 
\[
 a(x+y)= a(x)+a(y)
\]
holds for all $x, y\in \mathbb{R}$ such that 
\[
 f(x)= \frac{\alpha}{2}x^{2}+a(x) 
\]
for all $x\in \mathbb{R}$. 
\end{cor}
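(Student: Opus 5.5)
This is the special case $(S,+)=(\mathbb{R},+)$, $\mathbb{F}=\mathbb{R}$ of Corollary \ref{cor1}, which in turn is an instance of Proposition \ref{prop_biadditive}. I would therefore argue directly through Proposition \ref{prop_biadditive} rather than redo any computation.

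The first step is to define $B\colon \mathbb{R}\times\mathbb{R}\to\mathbb{R}$ by $B(x,y)=\alpha xy$. This map is symmetric because multiplication in $\mathbb{R}$ is commutative. It is biadditive by the distributive law, since $\alpha(x_1+x_2)y=\alpha x_1y+\alpha x_2y$, and likewise in the second variable. Equation \eqref{eq_add} is then literally equation \eqref{biadditive} with $S=\mathbb{R}$, which is a semigroup (indeed a group) under addition, and with $\mathbb{F}=\mathbb{R}$.

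The second step is to apply Proposition \ref{prop_biadditive}. It yields an additive $a\colon\mathbb{R}\to\mathbb{R}$ with
\[
f(x)=\tfrac12 B(x,x)+a(x)=\tfrac{\alpha}{2}x^{2}+a(x) \qquad (x\in\mathbb{R}),
\]
which is exactly the claim.

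There is no real obstacle here. The only point needing care is that the factor $\tfrac12$ in Proposition \ref{prop_biadditive} requires $2\neq 0$ in the field, and this holds in $\mathbb{R}$. Alternatively, one can check directly that $a(x):=f(x)-\tfrac{\alpha}{2}x^2$ is additive, using $(x+y)^2-x^2-y^2=2xy$.
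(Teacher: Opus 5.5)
Your proposal is correct and follows the paper's own route: the paper obtains this corollary immediately by specializing Corollary~\ref{cor1}, i.e.\ Proposition~\ref{prop_biadditive} with $B(x,y)=\alpha xy$, to $S=\mathbb{F}=\mathbb{R}$. Your remark that the factor $\tfrac12$ requires $2\neq 0$ is well taken, since the paper's Proposition~\ref{prop_biadditive} tacitly assumes this for a general field, but it causes no difficulty here because the field is $\mathbb{R}$.
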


Let $\mathbb{R}^{\times}$ denote the set of nonzero real numbers, i.e., 
$\mathbb{R}^{\times}= \{ x\in \mathbb{R}\, \vert \, x\neq 0\}$.

Similarly, if $D\in \{\mathbb{R}^{\times}, ]0, +\infty[ \}$  and  $(S, \cdot)= (D, \cdot)$ and $(\mathbb{F},+, \cdot)= (\mathbb{R},+, \cdot )$, the corollary below follows immediately. 

\begin{cor}\label{cor_exp_positive}
 Let $D\in \{\mathbb{R}^{\times}, ]0, +\infty[ \}$ and  $\alpha\in \mathbb{R}$ be fixed. If a function $f\colon D\to \mathbb{R}$ satisfies equation 
 \begin{equation}\label{eq_exp}
  f(xy)-f(x)-f(y)= \alpha xy
  \qquad 
  (x, y \in D), 
 \end{equation}
 then $\alpha=0$ and $f$ is a logarithmic function on $D$, i.e., 
\[
 f(xy)= f(x)+f(y)
\]
holds for all $x, y\in D$. 
\end{cor}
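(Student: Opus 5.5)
The plan is to obtain Corollary \ref{cor_exp_positive} directly from Corollary \ref{lem_exp_difference}, taking $(\mathbb{F},+,\cdot)=(\mathbb{R},+,\cdot)$ and $(S,\cdot)=(D,\cdot)$.

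First I will check the hypotheses. Both $\mathbb{R}^{\times}$ and $]0,+\infty[$ are closed under multiplication, so each is a subsemigroup of $(\mathbb{R},\cdot)$. Equation \eqref{eq_exp} is exactly equation \eqref{exp_difference} for this choice of $S$.

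Next I will apply Corollary \ref{lem_exp_difference}. It gives $\alpha=0$ and $f(xy)=f(x)+f(y)$ for all $x,y\in D$, which is precisely the statement that $f$ is logarithmic on $D$.

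The only point needing care is the last step of the proof of Corollary \ref{lem_exp_difference}. There the cocycle identity reduces to $\alpha y(x-z)=0$ for all $x,y,z\in S$, and one must conclude $\alpha=0$. This requires $S$ to contain a nonzero element $y$ and two distinct elements $x\neq z$. For $D=\mathbb{R}^{\times}$ or $D=]0,+\infty[$ this is immediate: take $y=1$, $x=1$, $z=2$. This yields $\alpha=0$, and substituting back into \eqref{eq_exp} gives the logarithmic equation.

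No real obstacle is expected, since the argument is a direct specialization of Corollary \ref{lem_exp_difference}.
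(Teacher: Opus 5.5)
Your proposal is correct and is the paper's argument: the corollary is stated to follow immediately by specializing Corollary \ref{lem_exp_difference} to $(S,\cdot)=(D,\cdot)$ and $\mathbb{F}=\mathbb{R}$. Your extra check that $D$ contains $1$ and $2$, so that $\alpha y(x-z)=0$ forces $\alpha=0$, fills in a point the paper's general argument glosses over. That check matters, since the conclusion of Corollary \ref{lem_exp_difference} fails for degenerate semigroups such as $S=\{1\}$, where $f(1)=-\alpha$ solves the equation for any $\alpha$.
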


Note that we can also consider functions $f\colon \mathbb{R}\to \mathbb{R}$ and assume that equation \eqref{eq_exp} to hold for all $x, y\in \mathbb{R}$. 
In this case, however, the set of solutions is significantly narrower. More precisely, only the identically zero function will be a solution to this equation. The reason is that if a function $f\colon \mathbb{R}\to \mathbb{R}$ satisfies  equation
\[
 f(xy)= f(x)+f(y)
\]
for all $x, y\in \mathbb{R}$, then $f$ must necessarily be identically zero. Therefore, whether $0$ is included (or not) in the domain of the involved function, plays an important role.

\begin{cor}\label{cor_exp_real}
 Let $\alpha\in \mathbb{R}$ be fixed. If a function $f\colon \mathbb{R}\to \mathbb{R}$ satisfies equation 
 \begin{equation}\label{eq_exp_real}
  f(xy)-f(x)-f(y)= \alpha xy
  \qquad 
  (x, y \in \mathbb{R}), 
 \end{equation}
 then $\alpha=0$ and $f$ is identically zero on $\mathbb{R}$. 
\end{cor}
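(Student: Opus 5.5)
We apply Corollary \ref{lem_exp_difference} with $\mathbb{F}=\mathbb{R}$ and $(S,\cdot)=(\mathbb{R},\cdot)$, which is a subsemigroup of $(\mathbb{R},\cdot)$. Its hypotheses are met, so it yields $\alpha=0$ and that $f$ satisfies
\[
 f(xy)=f(x)+f(y) \qquad (x,y\in\mathbb{R}).
\]
If one prefers to see the first step directly, it is the cocycle argument from that proof with the concrete choice $y=1$, $x=1$, $z=0$. From $\alpha y(x-z)=0$ we get $\alpha=0$, since $0$ and $1$ now both lie in the domain.

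It remains to show that a logarithmic function on all of $\mathbb{R}$ vanishes identically. Put $y=0$ in the displayed equation. This gives $f(0)=f(x)+f(0)$, hence $f(x)=0$ for every $x\in\mathbb{R}$. The identically zero function clearly satisfies \eqref{eq_exp_real} with $\alpha=0$, so the description is complete.

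Neither step is a real obstacle. The only point needing care is that $0\in S$ is exactly what allows the substitution $y=0$; this is the difference from Corollary \ref{cor_exp_positive}, where $0\notin D$ and nontrivial logarithmic functions survive.
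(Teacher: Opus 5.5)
Your proof is correct and follows the paper's reasoning: apply Corollary \ref{lem_exp_difference} with $S=\mathbb{R}$ to get $\alpha=0$ and that $f$ is logarithmic, then use $0\in\mathbb{R}$ (via $y=0$) to see that a logarithmic function on all of $\mathbb{R}$ vanishes identically. As a small shortcut, substituting $y=0$ directly into \eqref{eq_exp_real} already gives $f\equiv 0$, after which $\alpha xy=0$ for all $x,y$ forces $\alpha=0$, so the cocycle step is not even needed.
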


As a generalization of equation \eqref{additive_difference}, we can obtain the following statement with a completely similar proof. However, here it is important that $S$ is not just a semigroup, but a group and $\mathbb{F}^{\times}$ denotes the nonzero elements of  $\mathbb{F}$, i.e., 
$\mathbb{F}^{\times}= \{ x\in \mathbb{F}\, \vert \, x\neq 0\}$. 

\begin{prop}\label{prop2}
  Let  $(\mathbb{F},+, \cdot, )$ be a field and $(S, \cdot)$ be a subgroup of $(\mathbb{F}^{\times}, \cdot)$.  If the functions $f, \alpha\colon S\to \mathbb{F}$ satisfy equation 
 \begin{equation}\label{additive_difference_func}
  f(x\cdot y)-f(x)-f(y)= \alpha(xy)
 \end{equation}
for all $x, y\in S$, then there exists an additive function $a\colon S\to \mathbb{F}$, i.e., a function for which we have     
\[
 a(x \cdot y)= a(x)+a(y)
\]
for all $x, y\in S$ such that 
\[
 \alpha(x)= -f(1) 
 \qquad 
 \text{and}
 \qquad 
 f(x)= a(x)+f(1) 
 \qquad 
 \left(x\in S\right). 
\]
\end{prop}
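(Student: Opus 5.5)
The plan is to exploit the fact that the right-hand side of \eqref{additive_difference_func} depends only on the product $xy$. Since $S$ is a group, every product can be rewritten with one factor equal to the identity $1$, and this forces $\alpha$ to be constant.

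\textbf{Step 1: $\alpha$ is constant.} Fix $x,y\in S$ and replace the pair $(x,y)$ by $(xy,1)$, which is admissible because $1\in S$ and $xy\in S$. The right-hand side does not change, since $(xy)\cdot 1=xy$. This gives
\[
 f(xy)-f(x)-f(y)=\alpha(xy)=f(xy)-f(xy)-f(1)=-f(1).
\]
Every $t\in S$ can be written as $t=t\cdot 1$, so $\alpha(t)=-f(1)$ for all $t\in S$. One could equally substitute $y=1$ directly into \eqref{additive_difference_func}, which yields $\alpha(x)=f(x)-f(x)-f(1)=-f(1)$ at once.

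\textbf{Step 2: construct the additive function.} Equation \eqref{additive_difference_func} now reads
\[
 f(xy)=f(x)+f(y)-f(1)\qquad (x,y\in S).
\]
Define $a(x)=f(x)-f(1)$. Then
\[
 a(xy)=f(xy)-f(1)=\bigl(f(x)-f(1)\bigr)+\bigl(f(y)-f(1)\bigr)=a(x)+a(y),
\]
so $a$ is additive, i.e.\ logarithmic with respect to the multiplication in $S$, and $f=a+f(1)$.

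\textbf{Main obstacle.} The argument is routine. The only point that needs care is that $1\in S$, which holds because $S$ is a subgroup of $\mathbb{F}^{\times}$. This is where the group assumption is used, as the paper itself signals; on a mere semigroup without an identity the substitution of $1$ would be unavailable. The inverses in $S$ are not needed beyond guaranteeing that $1$ belongs to $S$.
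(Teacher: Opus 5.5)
Your proof is correct and follows the paper's argument essentially verbatim: you substitute $y=1$ to get $\alpha\equiv -f(1)$, and then show that $a=f-f(1)$ is additive on $(S,\cdot)$. Your side remark is also accurate: the argument only uses that $1\in S$, so it would work for any submonoid of $(\mathbb{F}^{\times},\cdot)$.
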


\begin{proof}
 Assume that the functions $f, \alpha, \colon S\to \mathbb{F}$ satisfy equation \eqref{additive_difference_func}. 
 Setting $y=1$ in \eqref{additive_difference_func}, 
 \[
 \alpha(x)= -f(1)
 \]
 follows for all $x\in S$. So $\alpha$ is a constant function. 
 In view of this, equation \eqref{additive_difference_func} reads as 
 \[
 f(x \cdot y)-f(x)-f(y)= -f(1)
 \qquad 
 \left(x, y\in S\right). 
 \]
 This shows that for the function $a\colon S\to \mathbb{F}$ defined by 
 \[
 a(x)= f(x)-f(1)
 \qquad 
 \left(x\in S\right), 
 \]
 we have 
 \[
 a(x\cdot y)= a(x)+a(y)
 \qquad 
 \left(x, y\in S\right). 
 \]
 So the function $a$ is additive on $S$, and we have 
 \[
 f(x)= a(x)+f(1)
 \qquad 
 \left(x\in S\right). 
 \]
\end{proof}

Below we study the 'additive' counterpart of the previously studied equation. 

\begin{prop}\label{prop3}
 Let $(\mathbb{F}, +, \cdot)$ be a field. If the functions $f, \alpha \colon \mathbb{F}\to \mathbb{F}$ satisfy 
 \begin{equation}\label{add_product}
  f(x+y)-f(x)-f(y)= \alpha(xy)
 \end{equation}
for all $x, y\in \mathbb{F}$, then there exist a constant $\gamma\in \mathbb{F}$, and there are additive functions $a, A\colon \mathbb{F}\to \mathbb{F}$, i.e., functions for which 
\[
 a(x+y)= a(x)+a(y) 
 \quad 
 \text{and}
 \quad 
 A(x+y)= A(x)+A(y) 
 \qquad 
 \left(x, y\in \mathbb{F}\right)
\]
such that 
\[
 \alpha(x)= a(x)+\gamma 
 \quad 
 \text{and}
 \quad 
 f(x)= \frac{1}{2}a(x^{2})+A(x)+\gamma
\]
holds for all $x\in \mathbb{F}$. 
\end{prop}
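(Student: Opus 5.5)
The plan is to separate the unknowns $\alpha$ and $f$, in two steps. First I will show that $\alpha$ is an additive function plus a constant. Then I will apply Proposition \ref{prop_biadditive} to the biadditive function $B(x,y)=a(xy)$.

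Throughout I assume $\mathrm{char}\,\mathbb{F}\neq 2$, as the factor $\frac12$ in the statement already requires.

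\textbf{Normalization.} Putting $x=y=0$ in \eqref{add_product} gives $\alpha(0)=-f(0)$. I set $\gamma:=\alpha(0)$ and $\beta(x):=\alpha(x)-\gamma$, so that $\beta(0)=0$. The values of $f(0)$ and $\gamma$ will be reconciled at the end.

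\textbf{Step 1: a functional equation for $\beta$ alone.} The left-hand side of \eqref{add_product} is a Cauchy difference $\mathscr{C}_f(x,y)$. As in the proof of Corollary \ref{lem_exp_difference}, it satisfies the cocycle equation
\[
\mathscr{C}_f(x,y)+\mathscr{C}_f(x+y,z)=\mathscr{C}_f(x,y+z)+\mathscr{C}_f(y,z).
\]
Substituting $\mathscr{C}_f(x,y)=\alpha(xy)$, the constants $\gamma$ cancel and we obtain
\[
\beta(xy)+\beta(xz+yz)=\beta(xy+xz)+\beta(yz)\qquad (x,y,z\in\mathbb{F}).
\]

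\textbf{Step 2: $\beta$ is additive.} First I extract oddness of $\beta$ by specializing the equation of Step 1. For instance, $x=1$ and $y=-1$ give $\beta(-1)+\beta(0)=\beta(z-1)+\beta(-z)$, and similar choices should yield $\beta(-w)=-\beta(w)$.

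Next, putting $y=-x$ with $x\neq0$ and writing $w=xz$ (which ranges over all of $\mathbb{F}$ as $z$ does) gives
\[
\beta(w-x^{2})=\beta(-x^{2})-\beta(-w).
\]
With oddness this becomes $\beta(w+s)=\beta(w)+\beta(s)$ for every $s$ of the form $-x^2$, and hence also for $s=x^2$.

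Finally, since $\mathrm{char}\,\mathbb{F}\neq2$, every $u\in\mathbb{F}$ is a difference of squares:
\[
u=\Bigl(\tfrac{u+1}{2}\Bigr)^{2}-\Bigl(\tfrac{u-1}{2}\Bigr)^{2}.
\]
Applying the restricted additivity twice then gives $\beta(w+u)=\beta(w)+\beta(u)$ for all $w,u$. So $a:=\beta$ is additive and $\alpha=a+\gamma$.

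I expect this step to be the main obstacle. The cocycle relation only gives additivity along the special configurations $(xy,\,xz,\,yz)$. If oddness does not come out cleanly from these specializations, my fallback is to use the symmetric version of the cocycle relation, obtained by permuting $x,y,z$, and to compare the resulting equations.

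\textbf{Step 3: solving for $f$.} The map $B(x,y):=a(xy)$ is symmetric and biadditive on $\mathbb{F}\times\mathbb{F}$. Equation \eqref{add_product} now reads
\[
\bigl(f(x+y)+\gamma\bigr)-\bigl(f(x)+\gamma\bigr)-\bigl(f(y)+\gamma\bigr)=a(xy).
\]
Applying Proposition \ref{prop_biadditive} to $f+\gamma$ yields an additive $A$ with
\[
f(x)=\tfrac12 a(x^{2})+A(x)-\gamma.
\]
Evaluating at $0$ gives $f(0)=-\gamma$, consistent with the normalization. Comparing with the formula in the statement, the constant appearing in $f$ there equals $f(0)$, so the two agree exactly when $\gamma=0$, i.e.\ $\alpha(0)=f(0)=0$. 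I will check this bookkeeping of $\gamma$ carefully when writing out the proof.
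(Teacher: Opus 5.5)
\textbf{The gap is in Step 2: oddness of $\beta$ is asserted but never proved.} Your overall route matches the paper's: cocycle identity, then show $\alpha-\gamma$ is additive, then apply Proposition~\ref{prop_biadditive} with $B(x,y)=a(xy)$. You are also right that the middle step is not automatic. The paper simply declares that $\alpha(xy)+\alpha(xz+yz)=\alpha(xy+xz)+\alpha(yz)$ ``in other words'' holds for arbitrary $(u,v,w)$. But triples $(xy,xz,yz)$ are special: $uvw$ must be a square, and $(0,v,w)$ with $v,w\neq0$ never occurs.

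Your Step 2 does not close this gap. ``Similar choices should yield'' oddness is not an argument. Your sample substitution $x=1$, $y=-1$ is just the case $x=1$ of your later substitution $y=-x$, and both give only
\[
\beta(u)+\beta(s-u)=\beta(s)\qquad \bigl(u\in\mathbb{F},\ s\in N:=\{-t^{2}: t\in\mathbb{F}^{\times}\}\bigr).
\]
This relation does not by itself give $\beta(-u)=-\beta(u)$, since that would need $s=0\notin N$. Everything after it rests on oddness, so this is the missing idea.

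\textbf{How to fill it.} Compose two instances of the relation: apply it with $s_1$ to $u$, and with $s_2$ to $s_1-u$. This gives $\beta(u+s_2-s_1)=\beta(u)+\beta(s_2)-\beta(s_1)$. Setting $u=0$ shows $\beta(s_2)-\beta(s_1)=\beta(s_2-s_1)$. Hence $\beta(u+d)=\beta(u)+\beta(d)$ for all $u$ and all $d\in N-N=\{p^2-q^2: p,q\neq0\}$. Your difference-of-squares idea, written as $d=\bigl(\frac{d+t^2}{2t}\bigr)^2-\bigl(\frac{d-t^2}{2t}\bigr)^2$ with $t\neq0$ and $t^2\neq\pm d$, shows $N-N=\mathbb{F}$ whenever some nonzero square lies outside $\{d,-d\}$, i.e.\ whenever $|\mathbb{F}|>5$. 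That yields full additivity directly, oddness included, and makes your restricted-additivity step unnecessary. The two small fields need a separate check. In $\mathbb{F}_5$ one has $N-N=\{0,2,3\}$ and $2$ generates the field. In $\mathbb{F}_3$ the relation with $s=-1$, $u=1$ gives $\beta(2)=2\beta(1)$. So $\beta$ is additive there as well.

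\textbf{On Step 3: your computation is right and the printed statement is wrong.} Since $\alpha(0)=-f(0)$, the pair $\alpha=a+\gamma$, $f=\frac12 a(x^2)+A+\gamma$ forces $\gamma=0$. Yet $\alpha\equiv c$, $f\equiv -c$ with $c\neq0$ solves \eqref{add_product}. The paper's proof contains the matching slip: with $\tilde f=f-\gamma$ one gets $\tilde f(x+y)-\tilde f(x)-\tilde f(y)=a(xy)+2\gamma$, not $a(xy)$. The correct normalization is your $f+\gamma$. So do not try to reconcile your result with the statement; state and prove the corrected conclusion $f(x)=\frac12 a(x^2)+A(x)-\gamma$.
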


\begin{proof}
 Let us consider the function $\mathscr{C}_{f}\colon \mathbb{F}\times \mathbb{F}\to \mathbb{F}$ defined by 
 \[
\mathscr{C}_{f}(x, y)= f(x+y)-f(x)-f(y)  
\qquad 
\left(x, y\in \mathbb{F}\right). 
 \]
 As $\mathscr{C}_{f}$ is a Cauchy difference generated by the function $f$, it fulfills the cocycle equation on $\mathbb{F}\times \mathbb{F}$, i.e., we have 
 \[
  \mathscr{C}_{f}(x, y)+\mathscr{C}_{f}(x+y, z)= \mathscr{C}_{f}(x, y+z)+\mathscr{C}_{f}(y, z) 
  \qquad 
  \left(x, y, z\in \mathbb{F}\right). 
 \]
 Further, due to our equation, we have $\mathscr{C}_{f}(x, y)= \alpha(xy)$ for all $x, y\in \mathbb{F}$. Therefore 
 \[
  \alpha(xy)+\alpha((x+y)z)= \alpha(x(y+z))+\alpha(yz) 
  \qquad 
  \left(x, y, z\in \mathbb{F}\right), 
 \]
or after some rearrangement 
\[
 \alpha(xy)+\alpha(xz+yz)= \alpha(xy+xz)+\alpha(yz). 
\]
In other words, we have 
\[
 \alpha(u)+\alpha(v+w)= \alpha(u+v)+\alpha(w) 
 \qquad 
 \left(u, v, w\in \mathbb{F}\right). 
\]
Therefore, there exists a constant $\gamma\in \mathbb{F}$ and an additive function $a\colon \mathbb{F}\to \mathbb{F}$ such that 
\[
 \alpha(x)= a(x)+\gamma 
 \qquad 
 \left(x\in \mathbb{F}\right). 
\]
Define the function $B\colon \mathbb{F}\times \mathbb{F}\to \mathbb{F}$ through 
\[
 B(x, y)= a (xy) 
 \qquad 
 \left(x, y\in \mathbb{F}\right). 
\]
Due to the additivity of the function $a$, the mapping $B$ is symmetric and biadditive. Consider now the function $\tilde{f}$ defined on $\mathbb{F}$ by 
\[
 \tilde{f}(x)= f(x)-\gamma 
 \qquad 
 \left(x\in \mathbb{F}\right)
\]
to deduce that we have 
\[
 \tilde{f}(x+y)-\tilde{f}(x)-\tilde{f}(y)= B(x, y) 
 \qquad 
 \left(x, y\in \mathbb{F}\right). 
\]
Observe that Proposition \ref{prop_biadditive} can be applied with the choice $S=\mathbb{F}$ to obtain that there exists an additive function $A\colon \mathbb{F}\to \mathbb{F}$ such that 
\[
 \tilde{f}(x)= \frac{1}{2}B(x, x)+A(x)=
 \frac{1}{2} a(x^{2})+A(x) 
 \qquad 
 \left(x\in \mathbb{F}\right).
\]
From this however we conclude that 
\[
 f(x)= \tilde{f}(x)+\gamma= \frac{1}{2} a(x^{2})+A(x)+\gamma
\]
for all $x\in \mathbb{F}$. 
\end{proof}

\begin{rem}
Interestingly, if we replace the inhomogeneity $\alpha(xy)$ in equation \eqref{add_product} with $\alpha(x)\alpha(y)$, the above results are no longer as effectively applicable. 
However, even in this case (by applying the above reasoning based on the cocyle equation), we can state that 
\[
 \alpha(x)\alpha(y)+\alpha(x+y)\alpha(z)= \alpha(x)\alpha(y+z)+\alpha(y)\alpha(z)
\]
holds for all $x, y, z\in \mathbb{F}$. 
From this, with $y=0$ it follows that 
\[
 \alpha(0)\cdot \left[\alpha(x)-\alpha(z)\right]=0 
 \qquad 
 \left(x, z\in \mathbb{F}\right). 
\]
That is, 
$\alpha(0)=0$ or $\alpha$ is constant on $\mathbb{F}$. 
\end{rem}

In what follows, we study the functional equation 
\[
 f(xy)-f(x)-f(y)= \alpha(x)\alpha(y). 
\]
Let us observe that the 'natural' domain of the functions $f$ and $\alpha$ appearing in this equation is a commutative group, while their range is a field. However, the operation defined on the group does not necessarily have to coincide with any of the operations defined on the field. This observation is particularly useful because, when considering real functions, the two equations
\[
 f(xy)-f(x)-f(y)= \alpha(x)\alpha(y)
\]
and 
\[
 f(x+y)-f(x)-f(y)= \alpha(x)\alpha(y)
\]
can be treated as instances of the same case. Indeed, in the first case we take $(G, \cdot)= (\mathbb{R}^{\times}, \cdot)$, while in the second case we put $(G, \cdot)= (\mathbb{R}, +)$.  Accordingly, being an additive function on the group $(\mathbb{R}^{\times}, \cdot)$ means 
\[
 a(x\cdot y)= a(x)+a(y) 
 \qquad 
 \left(x, y\in \mathbb{R}\right), 
\]
while being an additive function on the group $(\mathbb{R}, +)$ means that we have 
\[
 a(x+y)= a(x)+a(y) 
 \qquad 
 \left(x, y\in \mathbb{R}\right). 
\]

As we will see, in the case of the inhomogeneity $\alpha(x)\alpha(y)$, it is worth exploiting the fact that in such cases, the equation in question is a Levi-Civita equation. There is extensive literature on equations of this type; however, these functional equations can only be treated as part of a unified theory if the unknown functions in the equation are complex-valued. Therefore, from this point on, we will only deal with \emph{complex-valued} functions.

\begin{lem}\label{lem_exp_sol}
 Let $(G, \cdot)$ be a group and $f, \alpha \colon G\to \mathbb{C}$ be functions for which 
 \[
  f(x\cdot y)-f(x)-f(y)= \alpha(x)\cdot \alpha(y)
 \]
holds for all $x, y\in G$. Then the function $f$ is an exponential polynomial of degree at most $3$. If, in addition, the system $\{ f, \mathbf{1}, \alpha\}$ is linearly independent, then the function $\alpha$ is also an exponential polynomial of degree at most three.
\end{lem}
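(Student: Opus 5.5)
We rewrite the equation as a Levi-Civita equation and then apply the result from Székelyhidi's theory recalled in the Prerequisites subsection. For all $x, y\in G$ we have
\[
 f(x\cdot y)= f(x)\cdot \mathbf{1}(y)+\mathbf{1}(x)\cdot f(y)+\alpha(x)\cdot \alpha(y).
\]
This has the form $f(x\cdot y)=\sum_{i=1}^{3}g_i(x)h_i(y)$ with $n=3$, where
\[
 (g_1,g_2,g_3)=(f,\mathbf{1},\alpha)
 \qquad\text{and}\qquad
 (h_1,h_2,h_3)=(\mathbf{1},f,\alpha).
\]
By the consequence of Theorem 10.1 of \cite{Sze91}, the space $V$ spanned by the translates $\tau_g f$ ($g\in G$) lies in $\operatorname{span}\{g_1,g_2,g_3\}$. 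So $V$ is a translation-invariant linear space of dimension at most $3$. Hence it consists of exponential polynomials, and since $f=\tau_e f\in V$, the function $f$ is an exponential polynomial of degree at most $3$. This step needs no independence assumption: the bound $\dim V\le 3$ holds even in the degenerate case, because a representation with fewer terms gives an even smaller space.

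For the second claim, assume $\{f,\mathbf{1},\alpha\}$ is linearly independent. The $h$-system is $\{\mathbf{1},f,\alpha\}$, which is then linearly independent as well. We choose $y_1,y_2,y_3\in G$ such that the matrix $\bigl(h_i(y_j)\bigr)_{i,j}$ is invertible. Such points exist because linearly independent functions have a nonsingular evaluation matrix at suitable points, by the standard induction on the number of functions. Substituting $y=y_j$ gives
\[
 \tau_{y_j}f(x)=\sum_{i=1}^{3}g_i(x)h_i(y_j)\qquad (j=1,2,3).
\]
Inverting the matrix expresses each $g_i$, in particular $g_3=\alpha$, as a linear combination of the translates $\tau_{y_1}f,\tau_{y_2}f,\tau_{y_3}f$. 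Thus $\alpha\in V$, where $V$ is the same translation-invariant space of dimension at most $3$ consisting of exponential polynomials. Therefore $\alpha$ is an exponential polynomial of degree at most three, built from the same additive and exponential functions as $f$.

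The main obstacle is justifying the invertibility step and keeping the degree bookkeeping honest. "Degree at most $3$" should be read as the dimension of the generated translation-invariant space being at most $3$, in the sense of \cite{Sze91}. I expect the rest to be a direct application of the recalled facts. One could also argue more symmetrically that the roles of $g$ and $h$ coincide up to reordering, but the independence of $\{\mathbf{1},f,\alpha\}$ is exactly what is needed, and it is the hypothesis given.
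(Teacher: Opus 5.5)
Your proof is correct and follows the paper's argument. You rewrite the equation so that every translate $\tau_y f$ lies in $\mathrm{lin}(\{f,\mathbf{1},\alpha\})$, conclude via Theorem 10.1 of \cite{Sze91}, and under linear independence express $\alpha$ through translates of $f$; the only difference is that you spell out the evaluation-matrix inversion that the paper leaves to its recalled remark on the nondegenerate case (and note that the inclusion of the translates in $\mathrm{lin}(\{f,\mathbf{1},\alpha\})$ comes directly from the equation, not from Theorem 10.1).
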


\begin{proof}
 The equation appearing in the lemma, after some rearrangement, is the following 
 \[
  f(x\cdot y)=f(x)+f(y)+\alpha(x)\cdot \alpha(y) 
  \qquad 
  \left(x, y\in G\right), 
 \]
or, in a more detalied form 
\[
  f(x\cdot y)=f(x)\cdot \mathbf{1}+\mathbf{1}\cdot f(y)+\alpha(x)\cdot \alpha(y) 
  \qquad 
  \left(x, y\in G\right). 
\]
Observe that this already expresses that for all $y\in G$ we have 
\[
 \tau_{y}f \in \mathrm{lin}(\{f, \mathbf{1}, \alpha \}), 
\]
that is, all the translates of the function $f$ belong to the linear space spanned by the functions $f, \mathbf{1}$ and $\alpha$. So $f$ is an exponential polynomial of degree at most $3$. If, moreover the system  $\{ f, \mathbf{1}, \alpha\}$ is linearly independent, then the same holds true for the function $\alpha$, as well. 
\end{proof}

\begin{thm}\label{thm_exp_sol}
 Let $(G, \cdot)$ be a group and $f, \alpha \colon G\to \mathbb{C}$ be functions for which 
 \begin{equation}\label{eq_exp_sol}
  f(x\cdot y)-f(x)-f(y)= \alpha(x)\cdot \alpha(y)
 \end{equation}
 holds for all $x, y\in G$. Then there exist an additive function $a\colon G\to \mathbb{C}$, an exponential function $m\colon G\to \mathbb{C}$ and complex constants 
 $\alpha$ and $\gamma$ such that 
 \begin{equation}\label{eq_exp_sol_desc}
 \begin{array}{rcl}
 \alpha(x)&=& \alpha\cdot (m(x)-1)\\
  f(x)&=& \gamma \cdot a(x)+\alpha^{2}\cdot (m(x)-1)
  \end{array}
   \end{equation}
 hold for all $x\in G$. And also conversely, if we define the functions $f, \alpha \colon G\to \mathbb{C}$ through \eqref{eq_exp_sol_desc}, then they solve equation \eqref{eq_exp_sol}, provided that $a\colon G\to \mathbb{C}$ is an additive, $m\colon G\to \mathbb{C}$ is an exponential function and $\alpha$ and $\gamma$ are complex constants.  
\end{thm}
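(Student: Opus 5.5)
The plan is to use the cocycle identity for the Cauchy difference, as in Corollary \ref{lem_exp_difference} and Proposition \ref{prop3}, to get a functional equation for $\alpha$ alone. Then I would classify $\alpha$ and finally recover $f$ up to an additive function. Throughout I take $G$ commutative, as in the Prerequisites. I read ``exponential'' as allowing $m\equiv 0$: the constant case below needs it. I write $e$ for the identity of $G$, and $c$ for the constant called $\alpha$ in \eqref{eq_exp_sol_desc}, to avoid a clash with the function $\alpha$.

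\textbf{Step 1: the equation for $\alpha$.} Since $\mathscr{C}_f(x,y)=f(xy)-f(x)-f(y)=\alpha(x)\alpha(y)$ satisfies the cocycle equation, we get
\[
\alpha(x)\alpha(y)+\alpha(xy)\alpha(z)=\alpha(x)\alpha(yz)+\alpha(y)\alpha(z)\qquad(x,y,z\in G).
\]
Putting $y=e$ gives $\alpha(e)\,[\alpha(x)-\alpha(z)]=0$.

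\textbf{Step 2: the case $\alpha(e)\neq 0$.} Then $\alpha\equiv -c$ is a nonzero constant. The original equation becomes $f(xy)-f(x)-f(y)=c^{2}$, so $f+c^{2}$ is additive. This is exactly \eqref{eq_exp_sol_desc} with $m\equiv 0$, since then $\alpha(x)=c(0-1)=-c$ and $f=\gamma a-c^{2}$.

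\textbf{Step 3: the case $\alpha(e)=0$.} If $\alpha\equiv 0$, then $f$ is additive and we are done. Otherwise fix $z_0$ with $\alpha(z_0)\neq 0$ and put $z=z_0$ in the cocycle identity. Solving for $\alpha(xy)$ gives
\[
\alpha(xy)=\alpha(y)+\alpha(x)\varphi(y),\qquad \varphi(y)=\frac{\alpha(yz_0)-\alpha(y)}{\alpha(z_0)}.
\]
By commutativity the left-hand side is symmetric in $x,y$. Hence $\alpha(x)(\varphi(y)-1)=\alpha(y)(\varphi(x)-1)$, and taking $y=z_0$ yields $\varphi-1=\lambda\alpha$ for some constant $\lambda$. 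Therefore
\[
\alpha(xy)=\alpha(x)+\alpha(y)+\lambda\,\alpha(x)\alpha(y).
\]
If $\lambda\neq 0$, then $m:=1+\lambda\alpha$ is exponential and $\alpha=c(m-1)$ with $c=1/\lambda$. A direct computation shows
\[
c^{2}(m(x)-1)(m(y)-1)=c^{2}\big[(m(xy)-1)-(m(x)-1)-(m(y)-1)\big].
\]
So $f-c^{2}(m-1)$ has vanishing Cauchy difference, i.e.\ it is additive. This gives \eqref{eq_exp_sol_desc}, and the converse is the same computation read backwards.

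\textbf{The main obstacle: the subcase $\lambda=0$.} Here $\alpha=A$ is a nonzero additive function. Proposition \ref{prop_biadditive}, applied with $B(x,y)=A(x)A(y)$, gives $f=\tfrac12 A^{2}+a$. One checks directly that this is a genuine solution of \eqref{eq_exp_sol}. However, it does not seem to be of the form \eqref{eq_exp_sol_desc}. It is the degree-two polynomial limit of $c(m-1)$ as $m\to 1$, and in general it is not representable with a single exponential. I would therefore expect to have to add this family to the statement, i.e.\ $\alpha=A$ and $f=\tfrac12 A^{2}+a$. Alternatively one could show it is excluded by some hypothesis I am missing; I would first try to rule it out using Lemma \ref{lem_exp_sol} and Lemma \ref{L_lin_dep}, but I expect it to survive.
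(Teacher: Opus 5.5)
Your argument is correct, and the ``main obstacle'' is not a gap in your reasoning but an error in the statement itself. The family $\alpha=A$, $f=\tfrac12A^{2}+a$, with $A,a$ additive and $A\not\equiv 0$, solves \eqref{eq_exp_sol} but is not of the form \eqref{eq_exp_sol_desc}. The simplest instance is $G=(\mathbb{R},+)$, $\alpha(x)=x$, $f(x)=x^{2}/2$. Here $f(x+y)-f(x)-f(y)=xy=\alpha(x)\alpha(y)$, which is just the paper's own Corollary~\ref{cor_add} with $\alpha=1$. If $x=c\,(m(x)-1)$ held for an exponential $m$, then $c\neq 0$ and $m(x)=1+x/c$. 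But then $m(x+y)-m(x)m(y)=-xy/c^{2}\not\equiv 0$, a contradiction.

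So the theorem as stated cannot be proved, and you should not try to exclude this family via Lemma~\ref{lem_exp_sol} or Lemma~\ref{L_lin_dep}: no hypothesis is missing. What your Steps 1--3 actually establish is a corrected classification. This holds for commutative $G$, which is the paper's standing assumption and which your symmetry step needs. Either $\alpha=c\,(m-1)$ and $f=a+c^{2}(m-1)$ with $m$ exponential, or $\alpha=A$ and $f=\tfrac12A^{2}+a$. In the first family, $m\equiv 0$ (allowed by the paper's definition) gives the nonzero constant $\alpha$, and $m\equiv 1$ gives $\alpha\equiv 0$. Every step checks out, including the converse computation.

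The paper argues differently. It uses Sz\'ekelyhidi's description of Levi-Civita solutions as exponential polynomials, then does a case analysis on $\mathrm{rank}\{f,\mathbf{1},\alpha\}$. Your family is lost in the rank-$3$ Subcase~(i), where the displayed expansion is incorrect; it even contains cubic terms such as $a_{2}(x)a_{2}(y)^{2}$, which cannot occur. Since $f(xy)$ contributes $\sum_{p,q}\delta_{p}\varepsilon_{q}\bigl(a_{p}(x)a_{q}(y)+a_{p}(y)a_{q}(x)\bigr)$, the coefficient of $a_{1}(x)a_{1}(y)$ is $2\delta_{1}\varepsilon_{1}-\alpha_{1}^{2}$, not $-\alpha_{1}^{2}$. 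Now choose $\delta_{1}=\varepsilon_{1}=\alpha_{1}/\sqrt{2}$, $\alpha_{2}=\delta_{2}=\varepsilon_{2}=\gamma_{1}=\gamma=0$, and zero constant term in $\alpha(x)$. This satisfies all the correct coefficient conditions and yields exactly $\alpha=\alpha_{1}a_{1}$, $f=\tfrac12\alpha_{1}^{2}a_{1}^{2}+\gamma_{2}a_{2}$, which has rank $3$ when $\alpha_{1}\neq 0$.

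Your elementary cocycle reduction to $\alpha(xy)=\alpha(x)+\alpha(y)+\lambda\,\alpha(x)\alpha(y)$ needs neither the structure theory nor any coefficient bookkeeping. That is why it exposes the polynomial branch $\lambda=0$ directly. The omission also propagates to the corollaries. For example, $\alpha(x)=\beta x$, $f(x)=\tfrac{\beta^{2}}{2}x^{2}+\kappa x$ is a continuous real solution missing from Corollary~\ref{cor_add_reg}.
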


\begin{proof}
Depending on the rank of the system $\{f, \mathbf{1}, \alpha\}$, we need to distinguish between several cases. Clearly $\mathrm{rank}(\{f, \mathbf{1}, \alpha\})\in \{ 1, 2, 3\}$.

If $\mathrm{rank}(\{f, \mathbf{1}, \alpha\})=3$, then we have three subcases, depending on whether the variety of the function $f$ contains one, two, or three linearly independent exponential functions. Note that condition 
$\tau_{y}f\in \mathrm{lin}(\{f, \mathbf{1}, \alpha\})$ for all $y\in G$ guarantees that the identically one function (considered as an exponential) is always present in the variety of $f$.
\begin{enumerate}[{Subcase} (i)]
\item In the first subcase, we examine the situation when there is only a single exponential in the variety of the function $f$. Because of the above, this exponential must necessarily be the identically one function. Therefore,
\[
 \alpha(x)= \sum_{p=1}^{2}\alpha_{p}a_{p}(x)+\alpha 
 \quad 
 \text{and}
 \quad 
 f(x)= \sum_{p, q=1}^{2}\delta_{p}\varepsilon_{q} a_{p}(x)a_{q}(x)+\sum_{p=1}^{2}\gamma_{p}a_{p}(x)+\gamma
\]
holds for all $x\in G$ with some constants $\alpha_{p}, \delta_{p}, \varepsilon_{p}, \gamma_{p}, \alpha, \gamma$, and with some linearly independent additive functions $a_{p}\colon G\to \mathbb{C}$ for $p\in \{ 1, 2\}$. 
Substituting these forms back into our equation, 
\begin{align*}
0= & -\gamma - \alpha^2 - \alpha_2 a_2(y) \alpha - \alpha_1 a_1(y) \alpha - \alpha_2 a_2(x) \alpha - \alpha_1 a_1(x) \alpha \\
& + \delta_2 \epsilon_2 a_2(x) a_2(y)^2 - \delta_2 \epsilon_2 a_2(y)^2 + \delta_1 \epsilon_2 a_2(x) a_1(y) a_2(y) \\
& + \epsilon_1 \delta_2 a_1(x) a_1(y) a_2(y) - \delta_1 \epsilon_2 a_1(y) a_2(y) - \epsilon_1 \delta_2 a_1(y) a_2(y) \\
& + \delta_2 \epsilon_2 a_2(x)^2 a_2(y) + \delta_1 \epsilon_2 a_1(x) a_2(x) a_2(y) - \alpha_2^2 a_2(x) a_2(y) \\
& - \alpha_1 \alpha_2 a_1(x) a_2(y) - \gamma_2 a_2(y) + \delta_1 \epsilon_1 a_1(x) a_1(y)^2 - \delta_1 \epsilon_1 a_1(y)^2 \\
& + \epsilon_1 \delta_2 a_1(x) a_2(x) a_1(y) - \alpha_1 \alpha_2 a_2(x) a_1(y) + \delta_1 \epsilon_1 a_1(x)^2 a_1(y) \\
& - \alpha_1^2 a_1(x) a_1(y) - \gamma_1 a_1(y) - \delta_2 \epsilon_2 a_2(x)^2 - \delta_1 \epsilon_2 a_1(x) a_2(x) \\
& - \epsilon_1 \delta_2 a_1(x) a_2(x) - \delta_1 \epsilon_1 a_1(x)^2 
\end{align*}
follows for all $x, y\in G$. As $a_{1}$ and $a_{2}$ are linearly independent additive functions, this is possible only if all the coefficients vanish in the above identity. Especially, the coefficients of $a_{1}(x)a_{1}(y)$ and $a_{2}(x)a_{2}(y)$, that is, $- \alpha_{1}^2$ and $- \alpha_{2}^2$ should equal zero. In this case however, $\alpha_{1}=0$, $\alpha_{2}=0$ and thus $\alpha(x)=\alpha$ follows for all $x\in G$, but then $\mathrm{rank}(\{f, \mathbf{1}, \alpha\})=2$, which is a contradiction. This means that the equation has no solution for which the rank condition would be satisfied. This means that the equation has no solution for which the functions $f$ and $\alpha$ are of the above form. 

\item The second subcase is when there are exactly two exponentials in the variety of $f$, one of which is the identically one function.
\[
 \alpha(x)= \alpha_{2}m(x)+\alpha_{3} 
 \quad 
 \text{and}
 \quad 
 f(x)= \gamma_{1}a(x)+\gamma_{2}m(x)+\gamma_{3} 
\]
holds for all $x\in G$ with some complex constants $\alpha_{2}, \alpha_{3}, \gamma_{1}, \gamma_{2}, \gamma_{3}$, with an additive function $a\colon G\to \mathbb{C}$ and with an exponential $m\colon G\to \mathbb{C}$. Using these representations and our equation, 
\begin{multline*}
(\gamma_2- \alpha_2^2) \cdot m(x) m(y)
+
(- \alpha_2 \alpha_3 - \gamma_2) \cdot m(y)
\\
+(- \alpha_2 \alpha_3 - \gamma_2) m(x)
+(- \gamma_3 - \alpha_3^2)
=0
\end{multline*}
follows for all $x, y\in G$. In view of Lemma \ref{L_lin_dep}, this is possible only if 
\begin{align*}
 \gamma_2- \alpha_2^2 &=0\\
 \alpha_2 \alpha_3 + \gamma_2&=0\\
 \gamma_3  + \alpha_3^2&=0
\end{align*}
holds. Using the first two equations we get that either $\alpha_{2}=0$ or $\alpha_{3}+\alpha_{2}=0$. If $\alpha_{2}=0$, then necessarily $\gamma_{2}=0$ and by the third equation $\gamma_{3}=-\alpha_{3}^{2}$. Thus 
\[
 \alpha(x)= \alpha_{3} 
 \quad 
 \text{and}
 \quad 
 f(x)= \gamma_{1}a(x)-\alpha_{3}^{2} 
 \qquad 
 \left(x\in G\right). 
\]
In this case however $\mathrm{rank}(\{\mathbf{1}, \alpha\})=1$, so $\mathrm{rank}(\{f, \mathbf{1}, \alpha\})<3$ which is not possible by our assumptions. 

The other possibility is that $\alpha_{3}+\alpha_{2}=0$, and then $\gamma_{2}= \alpha_{2}^{2}$ and $\gamma_{3}=-\alpha_{2}^2$. Thus 
\[
 \alpha(x)= \alpha_{2}m(x)-\alpha_{2} 
 \quad 
 \text{and}
 \quad 
 f(x)= \gamma_{1}a(x)+\alpha_{2}^{2}m(x)-\alpha_{2}^{2}
\]
holds for all $x\in G$. An easy computation shows that these functions indeed solve our equation.  
\item The third case would be that the variety of the function $f$ contains three linearly independent exponential functions, but according to Székelyhidi \cite[page 92]{Sze91}, this is impossible.
\end{enumerate}

The second possibility is that $\mathrm{rank}(\{f, \mathbf{1}, \alpha \})=2$. 

In this case, our equation says that all the translates of the function $f$ belong to the two-dimensional linear space $\mathrm{lin}\{f, \mathbf{1}, \alpha \}$. Thus $f$ is an exponential polynomial of degree at most two. 
Again, depending on whether the variety of the function $f$ contains one,  or two linearly independent exponential functions, we have to distinguish subcases. Note that condition 
$\tau_{y}f\in \mathrm{lin}(\{f, \mathbf{1}, \alpha\})$ for all $y\in G$ guarantees that the identically one function (considered as an exponential) is always present in the variety of $f$.

Further, the assumption that $\mathrm{rank}(\{f, \mathbf{1}, \alpha \})=2$ yields that the system $\{f, \mathbf{1}, \alpha \}$ is linearly dependent. So there exist complex numbers $\lambda_{1}, \lambda_{2}$ and $\lambda_{3}$ that are not simoultaneously zero, such that 
\[
 \lambda_{1}f(x)+ \lambda_{2}\alpha(x)+\lambda_{3}=0
\]
holds for all $x\in G$. Observe that if $\lambda_{2}=0$ would hold, then the function $f$ would be  constant. Then from the functional equation $\alpha\equiv \mathrm{const.}$ would follow. At the same time, this would imply that $\mathrm{rank}(\{f, \mathbf{1}, \alpha \})=1$, which is not possible in this case. So $\lambda_{2}\neq 0$. In this case however, 
\[
 \alpha(x)= -\frac{\lambda_{1}}{\lambda_{2}}f(x)-\frac{\lambda_{3}}{\lambda_{2}} 
 \qquad 
 \left(x\in G\right)
\]
follows. Thus the function $\alpha$ is a linear combination of the functions $f$ and $\mathbf{1}$, i.e., 
\[
 \alpha(x)= \lambda f(x)+\mu. 
\]
Since the function $f$ is an exponential polynomial of degree at most two, we need to distinguish two cases, depending on whether the variety of the function $f$ contains one,  or two linearly independent exponential functions. 
\begin{enumerate}[{Subcase} (i)]
 \item The first possibility is that 
 \[
  f(x)= (\alpha_{1}a(x)+\alpha_{2})m(x) 
  \qquad 
  \left(x\in G\right), 
 \]
where $a\colon G\to \mathbb{C}$ is an additive function, $m\colon G\to \mathbb{C}$ is an exponential, and $\alpha_{1}, \alpha_{2}$ are complex constants. Accordingly, for the function $\alpha$, 
\begin{align*}
 \alpha(x)&= \lambda f(x)+\mu\\
 &= \lambda (\alpha_{1}a(x)+\alpha_{2})m(x)+\mu 
 \qquad 
 \left(x\in G\right). 
\end{align*}
Inserting these forms back to the functional equation, and using the linear independence of the involved functions, we obtain $\alpha_{1}^2\lambda^{2}=0$. If $\alpha_{1}=0$ would hold, then, again, from the functional equation itself, $\mu=0$ and $\alpha_{2}=0$ would follow. This would however mean that $f\equiv 0$, which is impossible. The other possibility is that $\lambda=0$, so $\alpha(x)= \mu\; (x\in G)$. Inserting this into the functional equation, 
\[
 f(xy)= f(x)+f(y)+\mu^{2} 
 \qquad 
 \left(x, y\in G\right).
\]
Therefore, the function $f+\mu^{2}$ is additive. So we have 
\[
f(x)= a(x)-\mu^{2} 
\qquad 
\left(x\in G\right). 
\]
Let us observe that this form is consistent with the above representation, if we choose the exponential $m\equiv 1$.
\item The other possibility is that there exist linearly independent exponentials $m_{1}, \allowbreak m_{2}\colon G\to \mathbb{C}$ and complex constants $\alpha_{1}, \alpha_{2}$ such that 
\[
 f(x)= \alpha_{1}m_{1}(x)+\alpha_{2}m_{2}(x) 
 \qquad 
 \left(x\in G\right). 
\]
Let us observe that one of the two exponential functions must be the identically one function, since otherwise $\operatorname{rank}(f, \alpha, \mathbf{1}) \geq \operatorname{rank}(m_1, m_2, {1}) = 3$, which is impossible because $\operatorname{rank}(f, \alpha, {1}) = 2$. Therefore, without the loss of generality $m_{2}\equiv 1$ can be assumed. 
This means that 
\begin{align*}
 f(x)&= \alpha_{1}m_{1}(x)+\alpha_{2}\\
 \alpha(x)& = \lambda f(x)+\mu= \lambda (\alpha_{1}m_{1}(x)+\alpha_{2}) 
 \qquad 
 \left(x\in G\right). 
\end{align*}
Inserting these representations back to our functional equation, the system of equations 
\begin{align*}
 \alpha_{1}&= \alpha_{1}^{2}\lambda^{2}\\
 0&=\alpha_{1}+\alpha_{1}\lambda\mu +\alpha_{1}\alpha_{2}\lambda^{2}\\
 0&= \alpha_{2}+\mu^{2}+2\alpha_{2}\lambda\mu +\alpha_{2}^2\lambda^{2}
\end{align*}
follows, as the involved functions are linearly independent. 
This system of equations has two different types of solutions. In the first case, $\alpha_{1} = 0$. However, this would mean that the function $f$ is constant, which is impossible due to the rank condition. In the second case,
$ \alpha_{1}= \dfrac{1}{\lambda^2}$, $\alpha_{2}= -\dfrac{1}{\lambda^2}$, $\lambda$ is arbitrary, and  $\mu=0$. For the functions $f, \alpha$ this yields that 
\[
 f(x)= \frac{1}{\lambda^2}(m(x)-1) 
 \qquad 
 \text{and}
 \qquad 
 \alpha(x)= \frac{1}{\lambda}(m(x)-1) 
 \qquad 
 \left(x\in G\right). 
\]
An easy computation shows that these functions indeed solve the functional equation. 
\end{enumerate}

Finally, the last case is that $\mathrm{rank}(\{f, \mathbf{1}, \alpha \})=1$. In this case, both the functions $f$ and $\alpha$ are constant functions. In other words, there exist complex constants $\alpha$ and $\gamma$ such that 
\[
 \alpha(x)= \alpha 
 \quad 
 \text{and}
 \quad
 f(x)= \gamma 
 \qquad 
 \left(x\in G\right). 
\]
Substituting back into our equation, we conclude that these constants should fulfill 
\[
 -\gamma= \alpha^2. 
\]
Thus 
\[
 \alpha(x)= \alpha 
 \quad 
 \text{and}
 \quad
 f(x)= -\alpha^2 
 \qquad 
 \left(x\in G\right)
\]
with a complex constant $\alpha$. 
\end{proof}

If we take $(G, \cdot)= (\mathbb{R}, +)$ in Theorem \ref{thm_exp_sol}, then the corollary below follows immediately. 

\begin{cor}[Solutions of the 'additive' equation on $\mathbb{R}$]
 The functions  $f, \alpha \colon \mathbb{R}\to \mathbb{C}$ satisfy functional equation
 \begin{equation}
  f(x+y)-f(x)-f(y)= \alpha(x)\alpha(y)
 \end{equation}
for all $x, y\in \mathbb{R}$, if and only if there exist an additive function $a\colon \mathbb{R}\to \mathbb{C}$, an exponential function $m\colon \mathbb{R}\to \mathbb{C}$ and complex constants 
 $\alpha$ and $\gamma$ such that 
 \begin{equation}
 \begin{array}{rcl}
 \alpha(x)&=& \alpha (m(x)-1)\\
  f(x)&=& \gamma  a(x)+\alpha^{2} (m(x)-1)
  \end{array}
   \end{equation}
 hold for all $x\in \mathbb{R}$. 
 \end{cor}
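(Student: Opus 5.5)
The corollary is the special case $(G,\cdot)=(\mathbb{R},+)$ of Theorem \ref{thm_exp_sol}, so the plan is to specialize the theorem and translate its notions. On $(\mathbb{R},+)$ the group operation $x\cdot y$ becomes $x+y$. An additive function $a\colon G\to\mathbb{C}$ in the sense of the definition is then one with $a(x+y)=a(x)+a(y)$. An exponential $m\colon G\to\mathbb{C}$ is one with $m(x+y)=m(x)m(y)$. Equation \eqref{eq_exp_sol} becomes exactly the displayed equation
\[
f(x+y)-f(x)-f(y)=\alpha(x)\alpha(y).
\]

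For necessity, assume $f,\alpha\colon\mathbb{R}\to\mathbb{C}$ satisfy the equation for all $x,y\in\mathbb{R}$. Theorem \ref{thm_exp_sol}, applied with $G=(\mathbb{R},+)$, gives an additive $a$, an exponential $m$ and constants $\alpha,\gamma\in\mathbb{C}$ for which \eqref{eq_exp_sol_desc} holds. That is precisely the claimed representation.

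For sufficiency I would quote the converse part of Theorem \ref{thm_exp_sol}. It is also easy to check directly. Write $c$ for the constant and $\alpha(x)=c(m(x)-1)$. Then
\[
\alpha(x)\alpha(y)=c^2\bigl(m(x)m(y)-m(x)-m(y)+1\bigr).
\]
The additive term $\gamma a$ contributes nothing to the Cauchy difference. The term $c^2(m-1)$ contributes
\[
c^2\bigl(m(x+y)-1-m(x)+1-m(y)+1\bigr)=c^2\bigl(m(x)m(y)-m(x)-m(y)+1\bigr),
\]
which is the same expression.

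There is no real obstacle here. The only points needing care are notational. First, the symbol $\alpha$ is used both for the unknown function and for the constant, so I would read the constant as a separate scalar. Second, the theorem's proof also found the constant solutions $\alpha\equiv c$, $f\equiv -c^2$ (the rank-one case). These are not covered literally by $m\not\equiv 0$: the choice $m\equiv 1$ gives $\alpha\equiv 0$. They are covered by taking the exponential $m\equiv 0$, which satisfies $m(x+y)=m(x)m(y)$, together with $a\equiv 0$. With that choice $\alpha\equiv -c$ and $f\equiv -c^2$, and replacing $c$ by $-c$ recovers the constant family. The representation therefore covers every solution, and the statement is an "if and only if" that follows immediately.
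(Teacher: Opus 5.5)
Your route is exactly the paper's: the corollary is read off from Theorem \ref{thm_exp_sol} with $(G,\cdot)=(\mathbb{R},+)$. Your direct check of sufficiency is correct, and so is your remark that the constant solutions come from $m\equiv 0$. The gap is the necessity direction, where you say there is no obstacle. The ``only if'' part is false, so quoting Theorem \ref{thm_exp_sol} cannot prove it. Take $\alpha(x)=x$ and $f(x)=\tfrac12x^2$. Then $f(x+y)-f(x)-f(y)=xy=\alpha(x)\alpha(y)$ for all $x,y\in\mathbb{R}$; this is just Corollary \ref{cor_add} with constant $1$. Now suppose $x=c\,(m(x)-1)$ with $m$ exponential. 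Then $c\neq0$ and $m(x)=1+x/c$, and $m(2)=m(1)^2$ forces $1/c^{2}=0$, which is absurd. More generally, $\alpha=A$, $f=\tfrac12A^2+a$ with $A,a$ additive and $A\not\equiv0$ solves the equation. It is never of the stated form, because $c\,(m-1)$ is additive only when it vanishes identically.

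The error is in Subcase (i) of the rank-three case in the proof of Theorem \ref{thm_exp_sol}. Substituting the polynomial forms correctly, the coefficient of $a_p(x)a_p(y)$ is $2\delta_p\varepsilon_p-\alpha_p^2$, not $-\alpha_p^2$. The identity displayed there is not what the substitution produces; it even contains cubic terms. So $\alpha_p=0$ does not follow. A correct proof can start from the cocycle identity
\[
\alpha(x)\alpha(y)+\alpha(x+y)\alpha(z)=\alpha(x)\alpha(y+z)+\alpha(y)\alpha(z),
\]
as in the remark after Proposition \ref{prop3}. If $\alpha\not\equiv0$, choose $z_0$ with $\alpha(z_0)\neq0$ and set $z=z_0$. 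Using the symmetry of $x+y$, you get a constant $k$ with $\alpha(x+y)=\alpha(x)+\alpha(y)+k\,\alpha(x)\alpha(y)$. For $k\neq0$, $m=1+k\alpha$ is exponential and $f-k^{-2}(m-1)$ is additive, which is the family in the statement. For $k=0$, $\alpha$ is additive, and Proposition \ref{prop_biadditive} with $B(x,y)=\alpha(x)\alpha(y)$ gives $f=\tfrac12\alpha^2+a$ with $a$ additive. A correct version of the corollary must list both families.
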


\begin{cor}[Regular solutions of the 'additive' equation on $\mathbb{R}$]\label{cor_add_reg}
 Let us assume that the functions $f, \alpha \colon \mathbb{R}\to \mathbb{R}$ fulfill at  one of the following conditions:
 \begin{enumerate}[(i)]
 \item $f$ and $\alpha$ are continuous on $\mathbb{R}$, 
 \item $f$ and $\alpha$ are continuous at a point $x_{0}\in \mathbb{R}$, 
 \item $f$ and $\alpha$ are measurable on a set of positive measure; 
 \item $f$ and $\alpha$ are bounded on a nonempty open subset of $\mathbb{R}$. 
\end{enumerate} 
Then $f$ and $\alpha$ satisfy the functional equation 
 \begin{equation}
  f(x+y)-f(x)-f(y)= \alpha(x) \alpha(y)
 \end{equation}
for all $x, y\in \mathbb{R}$, if and only if, 
there exist real constants $\alpha, \gamma$  and $\lambda$ such that 
  \[
 \begin{array}{rcl}
 \alpha(x)&=& \alpha \cdot (\exp(\lambda x )-1)\\
  f(x)&=& \gamma   x+\alpha^{2}\cdot (\exp(\lambda x )- 1)
  \end{array}
   \]
 hold for all $x\in \mathbb{R}$, or 
 \[
  \begin{array}{rcl}
 \alpha(x)&=& -\alpha\\
  f(x)&=& \gamma  x- \alpha^{2}
  \end{array}
  \]
for all $x\in \mathbb{R}$. 
\end{cor}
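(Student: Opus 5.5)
We obtain this as a consequence of Theorem \ref{thm_exp_sol} applied with $(G,\cdot)=(\mathbb{R},+)$. The argument has three parts: transfer the regularity hypothesis to the additive and exponential pieces, then use real-valuedness to force the constants to be real, and finally check the converse.

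First, by Theorem \ref{thm_exp_sol} there are a complex constant $c$ (written $\alpha$ in the statement), a constant $\gamma$, an additive $a\colon\mathbb{R}\to\mathbb{C}$ and an exponential $m\colon\mathbb{R}\to\mathbb{C}$ with
\[
\alpha(x)=c\,(m(x)-1),\qquad f(x)=\gamma a(x)+c^{2}(m(x)-1)\qquad (x\in\mathbb{R}).
\]
Note that $m\equiv 0$ is allowed, since it satisfies $m(x+y)=m(x)m(y)$; this choice produces exactly the second family, $\alpha\equiv -c$ and $f=\gamma a-c^{2}$.

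\emph{Case $c=0$.} Then $\alpha\equiv 0$ and $f=\gamma a$ is a real additive function. Each of the regularity conditions (i)--(iv) forces a real additive function to be linear, by the classical results of Cauchy, Fréchet--Banach--Sierpiński and Ostrowski. Hence $f(x)=\gamma' x$, which is the first family with constant $0$ in front of the exponential.

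\emph{Case $c\neq 0$.} Then $m=1+\alpha/c$, so $m$ inherits the regularity assumed on $\alpha$, applied to its real and imaginary parts. The standard regularity theorems for Cauchy's exponential equation then give either $m\equiv 0$ or $m(x)=e^{\lambda x}$ for some $\lambda\in\mathbb{C}$. In the second case, $m$ never vanishes, so $m=e^{A}$ with $A$ additive. Regularity of $m$ gives local boundedness of $\operatorname{Re}A=\log|m|$, so $\operatorname{Re}A$ is linear. A continuous local branch of the argument shows that $\operatorname{Im}A$ is linear as well.

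Next we use real-valuedness. The function $\operatorname{Im}(c e^{\lambda x})=\operatorname{Im}c$ is constant, so differentiating gives that $c\lambda e^{\lambda x}$ is real for all $x$. If $\lambda=p+iq$ with $q\neq 0$ and $c\lambda\neq 0$, the argument of $c\lambda e^{\lambda x}$ equals $\arg(c\lambda)+qx$, which is not constant modulo $\pi$. This is a contradiction, so $\lambda\in\mathbb{R}$. If $\lambda\neq 0$, then $\operatorname{Im}c\,(e^{\lambda x}-1)=0$ forces $c\in\mathbb{R}$. If $\lambda=0$, then $\alpha\equiv 0$ and we are back in the first case after renaming the constants.

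In all subcases $c^{2}(m-1)$ is real and continuous. Therefore $\gamma a=f-c^{2}(m-1)$ is a real additive function with the same regularity as $f$. It is therefore linear, $\gamma a(x)=\gamma' x$, and we arrive at one of the two displayed families.

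Conversely, both families are instances of \eqref{eq_exp_sol_desc}, with $a(x)=x$ and $m=\exp(\lambda\,\cdot)$ or $m\equiv 0$. Hence they solve the equation by Theorem \ref{thm_exp_sol}, and they are clearly real and continuous.

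The main obstacle is the middle step: transferring a weak regularity hypothesis, such as measurability or boundedness on an open set, to the complex exponential $m$. The delicate point is $\operatorname{Im}A$, which is only determined modulo $2\pi$. For this step we will rely on the classical regularity theory for the complex exponential equation rather than reproving it.
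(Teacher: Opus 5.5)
Your route is the paper's route: invoke Theorem~\ref{thm_exp_sol} for $(G,\cdot)=(\mathbb{R},+)$, split on whether the constant $c$ vanishes, and pass regularity to $m=1+\alpha/c$. You are more careful than the paper, which silently treats $c$, $m$ and $\gamma a$ as real. But the argument cannot be completed, because the ``only if'' direction of the statement is false, and the failure is in your very first step. Take $\alpha(x)=x$ and $f(x)=x^{2}/2$. These are continuous and real-valued, and $f(x+y)-f(x)-f(y)=xy=\alpha(x)\alpha(y)$. This is just Corollary~\ref{cor_add} with $\alpha=1$, or Proposition~\ref{prop_biadditive} with $B(x,y)=\alpha(x)\alpha(y)$. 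Yet $\alpha$ is neither constant nor of the form $c(e^{\lambda x}-1)$. So Theorem~\ref{thm_exp_sol} is incomplete: it misses the family $\alpha=A$, $f=\frac12A^{2}+a$ with $A,a$ additive. The error sits in Subcase (i) of the rank-$3$ case of its proof. The displayed expansion there contains cubic terms such as $a_2(x)a_2(y)^{2}$. Such terms cannot occur in $f(x+y)-f(x)-f(y)-\alpha(x)\alpha(y)$ when $f$ is quadratic and $\alpha$ is affine in $a_1,a_2$. Computed correctly, the coefficient of $a_p(x)a_p(y)$ is $2\delta_p\varepsilon_p-\alpha_p^{2}$, not $-\alpha_p^{2}$, and it can vanish with $\alpha_p\neq0$.

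Hence no proof of the corollary as stated exists. The regular real solutions also include $\alpha(x)=\lambda x$, $f(x)=\frac{\lambda^{2}}{2}x^{2}+\gamma x$ with $\lambda,\gamma\in\mathbb{R}$. To prove the corrected statement you must redo the rank-$3$ polynomial case. There the variety of $f$ has dimension at most $3$ and contains only the exponential $\mathbf{1}$, which forces $f=\kappa A^{2}+a+\gamma_0$. The equation then reads $2\kappa A(x)A(y)-\gamma_0=\alpha(x)\alpha(y)$. Comparing the ranks of the two kernels gives $\gamma_0=0$ and $\alpha=\pm\sqrt{2\kappa}\,A$, and regularity then makes $A$ and $a$ linear.

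There is also a secondary gap under hypothesis (iv). For complex exponentials, boundedness on an open set does not give $m=e^{\lambda x}$: $e^{i\varphi}$ with $\varphi$ a discontinuous additive function is bounded but not of that form. Your ``continuous local branch of the argument'' presupposes a continuity that (iv) does not supply. The fix is to use real-valuedness before regularity. Either $m\equiv0$, and then $c=-\alpha(0)\in\mathbb{R}$. Or the image of $m$ is a subgroup of $\mathbb{C}^{\times}$ contained in the line $1+\mathbb{R}c^{-1}$, and such a subgroup is trivial unless that line is $\mathbb{R}$. So either $\alpha\equiv0$, or $c\in\mathbb{R}$ and $m$ is a real exponential, to which the real regularity theorems apply.
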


\begin{proof}
Using Theorem \ref{thm_exp_sol}, we immediately get that the function $\alpha$ is of the form 
\[
 \alpha(x)= \alpha\cdot  (m(x)-1)  
 \qquad 
 \left(x\in \mathbb{R}\right). 
\]
If $\alpha=0$, then the function $\alpha$ is the identically zero function. In this case , $f$ is an additive function that fulfills at least one of the conditions (i)--(iv). So $f\colon \mathbb{R}\to \mathbb{R}$ is a regular additive function, i.e., 
\[
 f(x)= \gamma x 
\]
holds for all $x\in \mathbb{R}$ with some real constant $\gamma$. 

If $\alpha \neq 0$, then we have 
\[
 m(x)= \frac{\alpha(x)}{\alpha}+1 
 \qquad 
 \left(x\in \mathbb{R}\right). 
\]
As the function $\alpha$ fulfills at least one of the conditions (i)--(iv) appearing in the statement, the same holds for the exponential $m$. Thus $m\colon \mathbb{R}\to \mathbb{R}$ is a regular exponential function, i.e., either $m$ is identically zero, or there exists a real number $\lambda$ such that $m(x)= \exp(\lambda x)\; (x\in \mathbb{R})$. 
\end{proof}

\begin{rem}[One-sided boundedness]
 Let us observe that the fact that the functions $f$ and $\alpha$ are bounded from \emph{below} on $\mathbb{R}$ does not necessarily imply that they can be represented as described in the previous corollary. Indeed, any exponential function $m\colon \mathbb{R}\to \mathbb{R}$ is bounded below on $\mathbb{R}$. Accordingly, if take a nonnegative real number $\alpha$ and consider the functions 
 \[
  \begin{array}{rcl}
 \alpha(x)&=& \alpha (m (x)-1)\\
  f(x)&=& \gamma   x+\alpha^{2} (m (x)- 1)
  \end{array}
  \qquad 
  \left(x\in \mathbb{R}\right), 
 \]
then they solve the functional equation, but they are not necessarily of the form as presented in Corollary \ref{cor_add_reg}. 

The same holds for those solutions $f, \alpha\colon \mathbb{R}\to \mathbb{R}$ that are bounded only from \emph{above}. In such cases, the above reasoning must be applied with nonpositive $\alpha$.
\end{rem}

In what follows let $D\in \left\{\mathbb{R}^{\times}, ]0, +\infty[ \right\}$. As $(D, \cdot)$ is a commutative group, in view of Theorem \ref{thm_exp_sol}, we obtain the following. 

\begin{cor}[Solutions of the 'multiplicative' equation on $D$]\label{cor_multD}
 Let  $f, \alpha \colon D \to \mathbb{C}$ be functions for which 
 \begin{equation}\label{eq_mult}
  f(xy)-f(x)-f(y)= \alpha(x)\cdot \alpha(y)
 \end{equation}
 holds for all $x, y\in D$. Then there exist a logarithmic function $\ell \colon D \to \mathbb{C}$, a multiplicative function $m\colon \mathbb{R}\to \mathbb{C}$ and complex constants 
 $\alpha$ and $\gamma$ such that 
 \begin{equation}\label{eq_mult_sol}
 \begin{array}{rcl}
 \alpha(x)&=& \alpha (m(x)-1)\\
  f(x)&=& \gamma  \ell (x)+\alpha^{2}\cdot (m(x)-1)
  \end{array}
   \end{equation}
 hold for all $x\in D$. And also conversely, if we define the functions $f, \alpha \colon D\to \mathbb{C}$ through the formulas \eqref{eq_mult_sol}, then they solve the functional  equation above, provided that $\ell \colon \mathbb{R}^{\times}\to \mathbb{C}$ is a logarithmic, $m\colon \mathbb{R}^{\times}\to \mathbb{C}$ is a multiplicative function and $\alpha$ and $\gamma$ are complex constants.  
\end{cor}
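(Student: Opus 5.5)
This corollary is a direct specialization of Theorem \ref{thm_exp_sol}, so the plan is to check that the setting fits and then translate the vocabulary. First, $(D,\cdot)$ is a commutative group for $D\in\{\mathbb{R}^{\times}, ]0,+\infty[\}$: it is closed under multiplication, contains $1$, and contains inverses. Hence Theorem \ref{thm_exp_sol} applies to $(G,\cdot)=(D,\cdot)$ with the given $f,\alpha\colon D\to\mathbb{C}$ satisfying \eqref{eq_mult}. It yields an additive function $a\colon D\to\mathbb{C}$, an exponential $m\colon D\to\mathbb{C}$ and constants $\alpha,\gamma\in\mathbb{C}$ such that $\alpha(x)=\alpha(m(x)-1)$ and $f(x)=\gamma a(x)+\alpha^{2}(m(x)-1)$ for all $x\in D$.

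The second step is the translation of terms. On the group $(D,\cdot)$, ``additive'' means $a(xy)=a(x)+a(y)$, which is exactly the definition of a logarithmic function, so I set $\ell=a$. Likewise, ``exponential'' means $m(xy)=m(x)m(y)$, which is exactly the definition of a multiplicative function on $D$. Substituting these names gives \eqref{eq_mult_sol}.

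For the converse, take $\ell$ logarithmic and $m$ multiplicative on $D$; by the same dictionary they are additive and exponential on the group $(D,\cdot)$. The converse part of Theorem \ref{thm_exp_sol} then shows that the pair defined by \eqref{eq_mult_sol} solves \eqref{eq_mult}. As a sanity check, one can also verify this directly. The left-hand side of \eqref{eq_mult} is
\[
\gamma\bigl(\ell(xy)-\ell(x)-\ell(y)\bigr)+\alpha^{2}\bigl(m(x)m(y)-m(x)-m(y)+1\bigr),
\]
and this equals $\alpha^{2}(m(x)-1)(m(y)-1)=\alpha(x)\alpha(y)$.

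I expect no real obstacle. The only care needed concerns the domains: the statement mentions $m$ on $\mathbb{R}$ and $\ell,m$ on $\mathbb{R}^{\times}$, and these should all be read as functions on $D$, since only their restrictions to $D$ enter the formulas. I would also remark that both choices of $D$ are genuinely groups, whereas $\mathbb{R}$ under multiplication is not, which is why $0$ is excluded from the domain.
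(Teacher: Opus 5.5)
Your reduction is exactly the paper's: its entire justification for this corollary is that $(D,\cdot)$ is a commutative group, so Theorem~\ref{thm_exp_sol} applies. Your dictionary (additive $=$ logarithmic, exponential $=$ multiplicative on $(D,\cdot)$) is correct, and so is your direct check of the converse. The necessity direction, however, fails, in your proof and in the paper's, because Theorem~\ref{thm_exp_sol} is false, and so is this corollary. For either choice of $D$ take
\[
\alpha(x)=\ln|x|, \qquad f(x)=\tfrac{1}{2}(\ln|x|)^{2} \qquad (x\in D).
\]
Then $f(xy)-f(x)-f(y)=\ln|x|\ln|y|=\alpha(x)\alpha(y)$. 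This $f$ has exactly the form described by Proposition~\ref{prop_biadditive} (with $a=0$) for the semigroup $(D,\cdot)$ and the symmetric biadditive $B(x,y)=\ln|x|\ln|y|$, so the paper itself contains this solution.

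Yet $\alpha$ is not of the form $c\,(m-1)$ with $c$ constant and $m$ multiplicative. Indeed, $c=0$ would give $\alpha\equiv 0$, while $c\neq 0$ forces $m=1+c^{-1}\ln|\cdot|$, and then $m(e^{2})=1+2/c\neq (1+1/c)^{2}=m(e)^{2}$.

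The error is in the rank-$3$ Subcase~(i) of the proof of Theorem~\ref{thm_exp_sol}. After substitution, the coefficient of $a_p(x)a_p(y)$ is $2\delta_p\varepsilon_p-\alpha_p^{2}$, not $-\alpha_p^{2}$, because the quadratic part of $f(x\cdot y)$ contributes as well. The displayed expansion even contains cubic terms, which cannot arise. Computed correctly, that subcase yields the solutions $\alpha=A$, $f=\tfrac12A^{2}+L$ with additive $A\neq 0$ and $L$, which the theorem omits.

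The correct statement adds the family $\alpha=\ell_1$, $f=\tfrac12\ell_1^{2}+\ell_2$ with logarithmic $\ell_1,\ell_2$. It has a short elementary proof that avoids exponential polynomials altogether:
\begin{enumerate}
\item The Cauchy difference of $f$ satisfies the cocycle equation, which here becomes $\alpha(z)[\alpha(xy)-\alpha(y)]=\alpha(x)[\alpha(yz)-\alpha(y)]$.
\item If $\alpha$ is not identically zero, fix $z_0$ with $\alpha(z_0)\neq 0$. This gives $\alpha(xy)=\alpha(y)+\alpha(x)g(y)$ for some function $g$.
\item Compare this with the same identity with $x$ and $y$ interchanged, then set $y=z_0$. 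This yields $g=1+\lambda\alpha$ for a constant $\lambda$, i.e.\ $\alpha(xy)=\alpha(x)+\alpha(y)+\lambda\alpha(x)\alpha(y)$.
\item If $\lambda=0$, then $\alpha$ and $f-\tfrac12\alpha^{2}$ are logarithmic; this is the missing family.
\item If $\lambda\neq 0$, then $m=1+\lambda\alpha$ is multiplicative (possibly $m\equiv 0$, which covers constant $\alpha$), and $f-\lambda^{-2}(m-1)$ is logarithmic. This is the family in the statement with constant $\lambda^{-1}$.
\end{enumerate}
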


\begin{cor}[Regular solutions of the 'multiplicative' equation on $D$]
 Let  $f, \alpha \colon D \to \mathbb{R}$ be functions for which 
 \begin{equation}\label{eq_mult_reg}
  f(xy)-f(x)-f(y)= \alpha(x)\cdot \alpha(y)
 \end{equation}
 holds for all $x, y\in D$.
 Let us further assume that at least one of the following conditions is satisfied.
\begin{enumerate}[(i)]
 \item $f$ and $\alpha$ are continuous on $D$, 
 \item $f$ and $\alpha$ are continuous at a point $x_{0}\in D$, 
 \item $f$ and $\alpha$ are measurable on a set $\tilde{D}\subset D$ of positive measure; 
 \item $f$ and $\alpha$ are bounded on a nonempty open subset of $D$. 
\end{enumerate}  
 Then there exist real numbers  $\alpha$ and $\gamma$ such that 
 \begin{equation}\label{eq_mult_sol_reg}
 \begin{array}{rcl}
 \alpha(x)&=& \alpha\cdot (m(x)-1)\\
  f(x)&=& \gamma \cdot \ln(|x|)+\alpha^{2}\cdot (m(x)-1)
  \end{array}
   \end{equation}
 hold for all $x\in D$, where the function $m\colon D\to \mathbb{R}$ has one of the following forms
 \begin{align*}
m(x)&=0 & (x\in D)\\
m(x)&= |x|^{\mu} & (x\in D)\\
m(x)&= \operatorname{sign}(x) |x|^{\mu} & (x\in D)
 \end{align*}
 with an appropriate real number $\mu$. 
 And also conversely, if we define the functions $f, \alpha \colon D\to \mathbb{C}$ through the formulas \eqref{eq_mult_sol_reg}, then they solve the functional  equation above, provided that $m\colon D\to \mathbb{C}$ is one of the above functions,  and $\alpha$ and $\gamma$ are real constants.  
\end{cor}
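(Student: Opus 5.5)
Since $(D,\cdot)$ is a commutative group, Corollary \ref{cor_multD} applies to the real-valued pair $f,\alpha$ (viewed as complex-valued). It gives a logarithmic function $\ell\colon D\to\mathbb{C}$, a multiplicative $m\colon D\to\mathbb{C}$ and complex constants $\alpha,\gamma$ with $\alpha(x)=\alpha(m(x)-1)$ and $f(x)=\gamma\ell(x)+\alpha^{2}(m(x)-1)$. The plan is to use the regularity hypotheses to pin down $m$ and $\gamma\ell$, in the same way as in the proof of Corollary \ref{cor_add_reg}. The converse direction is immediate from the converse part of Corollary \ref{cor_multD}, since $\ln|x|$ is logarithmic and each listed $m$ is multiplicative on $D$.

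The argument splits on the constant $\alpha$.

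\textbf{Case $\alpha=0$.} Then $\alpha\equiv 0$, and $f=\gamma\ell$ is a real-valued logarithmic function on $D$. It inherits one of (i)--(iv). Composing with $\exp$ on the positive half-line gives a real additive function $t\mapsto f(e^{t})$, and this function satisfies the corresponding regularity condition. Continuity at a point and boundedness on an open set transfer because $\exp$ is a homeomorphism; measurability transfers because $\exp$ and $\ln$ map null sets to null sets and sets of positive measure to sets of positive measure. Hence $f(e^{t})=ct$, i.e.\ $f(x)=c\ln x$ for $x>0$. If $D=\mathbb{R}^{\times}$, then $2f(-1)=f(1)=0$, so $f(-x)=f(x)$ and $f(x)=c\ln|x|$. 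This is the claimed form with the constant $\alpha=0$ and any admissible $m$.

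\textbf{Case $\alpha\neq0$.} Then $m=\alpha(\cdot)/\alpha+1$ is real-valued and satisfies the same regularity condition as $\alpha$. Consequently $f-\alpha^{2}(m-1)=\gamma\ell$ is a real-valued logarithmic function with the same regularity, so the previous case gives $\gamma\ell(x)=\gamma'\ln|x|$ for a real $\gamma'$.

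It remains to classify the regular real multiplicative $m$ on $D$.
\begin{itemize}
\item If $m$ vanishes somewhere, then $m(x)=m(y)m(y^{-1}x)$ forces $m\equiv0$.
\item Otherwise $m(x)=m(\sqrt{x})^{2}>0$ on $]0,+\infty[$. Then $\ln m(e^{t})$ is a regular additive function, so $m(x)=x^{\mu}$ there.
\item On $\mathbb{R}^{\times}$, $m(-1)^{2}=m(1)=1$, so $m(-1)=\pm1$ and $m(x)=m(-1)m(|x|)$. This yields $|x|^{\mu}$ or $\operatorname{sign}(x)|x|^{\mu}$.
\end{itemize}

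The main obstacle is the transfer of regularity, especially for (ii) and (iii). For (ii), continuity at a single point $x_0$ must be pushed through the change of variables $x=\pm e^{t}$ and then through $\ln m$. For this one first notes that a nowhere-vanishing multiplicative function continuous at $x_0$ is continuous at $1$ by translation, so locally positive near $1$, and the logarithm is then legitimate near $0$ in the $t$-variable. For (iii), on $D=\mathbb{R}^{\times}$ the set of positive measure may lie on the negative axis. It must be reflected to the positive axis, using that $m(-x)=m(-1)m(x)$ and $f(-x)=f(x)+f(-1)+\alpha(-1)\alpha(x)$ are again measurable there, before invoking the classical regularity theorems for additive functions.
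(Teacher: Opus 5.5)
Your route is the one the paper intends: it gives no separate proof, simply specializes Corollary~\ref{cor_multD}, and argues as in Corollary~\ref{cor_add_reg}. Your transfer of (i)--(iv) to regular logarithmic and multiplicative functions is sound. The gap is the first step. Corollary~\ref{cor_multD}, and Theorem~\ref{thm_exp_sol} behind it, does not describe all solutions, and in fact the statement you are proving is false.

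On either choice of $D$, take $\alpha(x)=\ln|x|$ and $f(x)=\frac{1}{2}\ln^{2}|x|$. Both are continuous and real-valued, and $f(xy)-f(x)-f(y)=\ln|x|\ln|y|=\alpha(x)\alpha(y)$. Yet $\ln|x|=c\,(m(x)-1)$ is impossible for any constant $c$ and any multiplicative $m$, since it would force $m(xy)-m(x)m(y)=-\ln|x|\ln|y|/c^{2}\not\equiv 0$.

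The slip in the paper is in Subcase~(i) of the rank-$3$ case of Theorem~\ref{thm_exp_sol}. With $f=\sum_{p,q}\delta_p\varepsilon_q a_pa_q+\dots$, the coefficient of $a_p(x)a_p(y)$ is $2\delta_p\varepsilon_p-\alpha_p^{2}$, not $-\alpha_p^{2}$. The displayed expansion even contains cubic terms such as $a_1(x)a_1(y)^2$, which cannot occur. The correct condition admits the solutions with $\alpha$ additive and $f=\frac{1}{2}\alpha^{2}+a$. No care in the regularity step can rescue a conclusion that omits a whole continuous family.

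To repair the argument, classify the solutions directly instead of quoting Corollary~\ref{cor_multD}.
\begin{enumerate}[(a)]
\item If $\alpha\equiv 0$, then $f$ is logarithmic.
\item Otherwise, fix $z$ with $\alpha(z)\neq 0$ in the cocycle identity $\alpha(x)\alpha(y)+\alpha(xy)\alpha(z)=\alpha(x)\alpha(yz)+\alpha(y)\alpha(z)$ satisfied by the Cauchy difference. Solving for $\alpha(xy)$ and using the symmetry in $x,y$ gives $\alpha(xy)=\alpha(x)+\alpha(y)+k\,\alpha(x)\alpha(y)$ for a constant $k$. This $k$ is real, because $k=(\alpha(x^{2})-2\alpha(x))/\alpha(x)^{2}$ whenever $\alpha(x)\neq 0$.
\item If $k\neq 0$, then $m=1+k\alpha$ is a real multiplicative function with the regularity of $\alpha$, and $f-k^{-2}(m-1)$ is logarithmic. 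This is the family in the statement. It also supplies a step you skipped: why the complex constant in $\alpha(x)=c\,(m(x)-1)$ is real, so that $m$ is real-valued.
\item If $k=0$, then $\alpha$ and $f-\frac{1}{2}\alpha^{2}$ are both logarithmic and inherit the regularity. Your argument then yields the missing family $\alpha(x)=c\ln|x|$, $f(x)=\frac{c^{2}}{2}\ln^{2}|x|+\gamma\ln|x|$, which must be added to the conclusion.
\end{enumerate}
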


The functional equation under investigation also makes sense for functions $f$ and $\alpha$ whose domain includes $0$. As we will demonstrate below, in such cases the functional equation has significantly fewer solutions. Accordingly, let 
 $D^{\ast}\in \left\{\mathbb{R}, [0, +\infty[\right\}$. 
 If $D^{\ast}=\mathbb{R}$, then $D^{\ast}\setminus \left\{ 0\right\} =\mathbb{R}^{\times}$. 
 Similarly, if $D^{\ast}=[0, +\infty[$, then $D^{\ast}\setminus \left\{ 0\right\} =]0, +\infty[$.

\begin{cor}[Solutions of the 'multiplicative' equation on $D^{\ast}$]
 Let  $f, \alpha \colon D^{\ast} \to \mathbb{C}$ be functions for which 
 \[
  f(xy)-f(x)-f(y)= \alpha(x)\cdot \alpha(y)
 \]
 holds for all $x, y\in D^{\ast}$. Then there exist a multiplicative function $m\colon \mathbb{R}\to \mathbb{C}$ and complex constants 
 $\alpha$ and $\gamma$ such that 
 \[
 \begin{array}{rcl}
 \alpha(x)&=& \alpha (m(x)-1)\\
  f(x)&=& \alpha^{2} (m(x)-1)
  \end{array}
   \]
 hold for all $x\in D$. And also conversely, if we define the functions $f, \alpha \colon G\to \mathbb{C}$ through the above formulas, then they solve the functional  equation above, provided that $m\colon \mathbb{R}\to \mathbb{C}$ is a multiplicative function and $\alpha$ and $\gamma$ are complex constants.  
\end{cor}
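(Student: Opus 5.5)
We restrict the functional equation to $D^{\ast}\setminus\{0\}$, which is one of the groups $D\in\{\mathbb{R}^{\times},\,]0,+\infty[\}$, and apply Corollary \ref{cor_multD}. This gives a logarithmic function $\ell$, a multiplicative $m$ and constants $\alpha,\gamma$ with $\alpha(x)=\alpha(m(x)-1)$ and $f(x)=\gamma\ell(x)+\alpha^{2}(m(x)-1)$ for all $x\neq 0$. It then remains to show two things: the logarithmic part disappears, and the formulas extend to $x=0$ once $m$ is extended suitably to a multiplicative function on $D^{\ast}$.

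Next we exploit $0$. Put $c=f(0)$ and $d=\alpha(0)$.
\begin{itemize}
\item Setting $x=y=0$ gives $-c=d^{2}$.
\item Setting $y=0$ with $x$ arbitrary gives $c-f(x)-c=\alpha(x)d$, that is, $f(x)=-d\,\alpha(x)$ for all $x\in D^{\ast}$.
\end{itemize}
So $f$ is a constant multiple of $\alpha$. Substituting this into the equation shows that $\alpha$ satisfies
\[
-d\,\alpha(xy)+d\,\alpha(x)+d\,\alpha(y)=\alpha(x)\alpha(y).
\]
Two cases follow.
\begin{itemize}
\item If $d=0$, then $f\equiv 0$ and $\alpha(x)\alpha(y)=0$ for all $x,y$, hence $\alpha\equiv0$. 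This is covered by the formula with $\alpha=0$.
\item If $d\neq0$, define $m(x)=1-\alpha(x)/d$. A direct check shows that $m(xy)=m(x)m(y)$ on all of $D^{\ast}$, including $0$. Then, writing $\alpha:=-d$ for the constant, we get $\alpha(x)=\alpha(m(x)-1)$ and $f(x)=-d\,\alpha(x)=\alpha^{2}(m(x)-1)$, as required.
\end{itemize}
This second route actually bypasses Corollary \ref{cor_multD} altogether, so I would present it as the main argument. Comparing with the corollary's form on $x\neq0$ is then only a consistency check: it forces $\gamma\ell=0$.

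The converse is a direct computation:
\[
\alpha^{2}\bigl(m(x)m(y)-1-m(x)+1-m(y)+1\bigr)=\alpha^{2}(m(x)-1)(m(y)-1).
\]
The only delicate point is the domain. The statement writes $m\colon\mathbb{R}\to\mathbb{C}$, whereas we need $m$ multiplicative on $D^{\ast}$; when $D^{\ast}=[0,+\infty[$ we take $m$ on $[0,+\infty[$, or extend it evenly if a function on $\mathbb{R}$ is wanted. The constant $\gamma$ in the statement plays no role.

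I expect no real obstacle. The key step is the substitution $y=0$, which gives the linear relation $f=-d\,\alpha$ and reduces the problem to the single multiplicative function $m$.
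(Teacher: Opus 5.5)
Your proof is correct, and it takes a genuinely different, more elementary route than the paper.

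\textbf{The paper's route.} It first applies Corollary~\ref{cor_multD} on $D^{\ast}\setminus\{0\}$, which rests on Theorem~\ref{thm_exp_sol} and hence on Sz\'ekelyhidi's theory of finite-dimensional translation-invariant spaces. Only then does it use the substitution $y=0$, i.e.\ $f=-\alpha(0)\,\alpha$, to remove the logarithmic term via $\gamma\ell(x)+(\alpha^{2}+\alpha\,\alpha(0))(m(x)-1)=0$.

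\textbf{Your route.} You use the same substitution but carry it through to the end. Once $f=-d\,\alpha$, the equation is exactly equivalent to the multiplicativity of $1-\alpha/d$ when $d\neq 0$, and to $\alpha^{2}\equiv 0$ when $d=0$. No exponential-polynomial machinery is needed.

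\textbf{What your route buys.}
\begin{enumerate}[(a)]
\item It is self-contained.
\item It gives the representation on all of $D^{\ast}$, including $x=0$. When $\alpha(0)\neq 0$ you get $m(0)=0$ and the constant equal to $-\alpha(0)$. The paper's argument only yields the representation on $D^{\ast}\setminus\{0\}$. It also leaves the value at $0$ and the extension of $m$ to $\mathbb{R}$ unaddressed.
\item It avoids the paper's unexplained step ``from which $\gamma=0$ follows''. That step actually requires showing that a nonzero logarithmic $\ell$ and $m-1$, with $m$ not identically $1$, are linearly independent. Even then it only gives $\gamma\ell\equiv 0$.
\item It works verbatim on any semigroup with an absorbing element, with values in any field.
\end{enumerate}

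\textbf{What the paper's route buys.} It mainly serves the exposition: it presents the $D^{\ast}$ case as a specialization of the $D$ case, in which the point $0$ destroys the logarithmic component.

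Your closing remark that comparison with Corollary~\ref{cor_multD} forces $\gamma\ell=0$ would need that same independence argument. Since nothing in your proof depends on it, you can simply drop it.
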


\begin{proof}
 As $D^{\ast}\setminus \left\{ 0\right\} \in \left\{\mathbb{R}^{\times}, ]0, +\infty [\right\}$, with the application of Corollary \ref{cor_multD}, we obtain that there exist a logarithmic function $\ell \colon \mathbb{R}\to \mathbb{C}$, a multiplicative function $m\colon \mathbb{R}\to \mathbb{C}$ and complex constants 
 $\alpha$ and $\gamma$ such that 
 \begin{equation}
 \begin{array}{rcl}
 \alpha(x)&=& \alpha\cdot (m(x)-1)\\
  f(x)&=& \gamma \cdot \ell (x)+\alpha^{2}\cdot (m(x)-1)
  \end{array}
   \end{equation}
 hold for all $x\in D$. Let now $x\in D$ be arbitrary and $y=0$ in our functional equation, to get that
 \[
  f(x)= -\alpha(0)\alpha(x)
 \]
should hold for all $x\in D$. This however means that 
\[
 \gamma  \ell (x)+\alpha^{2} \cdot (m(x)-1)
 = -\alpha(0) \alpha \cdot (m(x)-1), 
\]
that is, 
\[
 \gamma  \ell (x)+(\alpha^{2}+\alpha  \alpha(0)) (m(x)-1)
 = 0
\]
holds for all $x\in D$. From which $\gamma=0$ follows. 
\end{proof}

With the aid of the previous corollary, the regular solutions can be determined easily. 

\begin{cor}[Regular solutions of the 'multiplicative' equation on $D^{\ast}$]
 Let  $f, \alpha \colon D^{\ast} \to \mathbb{C}$ be functions for which 
 \[
  f(xy)-f(x)-f(y)= \alpha(x)\alpha(y)
 \]
 holds for all $x, y\in D^{\ast}$. Then there exist a multiplicative function $m\colon \mathbb{R}\to \mathbb{C}$ and complex constants 
 $\alpha$ and $\gamma$ such that 
 \[
 \begin{array}{rcl}
 \alpha(x)&=& \alpha\cdot (m(x)-1)\\
  f(x)&=& \alpha^{2}\cdot (m(x)-1)
  \end{array}
   \]
 hold for all $x\in D^{\ast}$, where the function $m\colon D^{\ast}\to \mathbb{R}$ has one of the following forms
 \begin{align*}
m(x)&=0 & (x\in D)\\
m(x)&= |x|^{\mu} & (x\in D)\\
m(x)&= \operatorname{sign}(x) |x|^{\mu} & (x\in D)
 \end{align*}
 with an appropriate nonnegative real number $\mu$.   And also conversely, if we define the functions $f, \alpha \colon G\to \mathbb{C}$ through the above formulas, then they solve the functional  equation above, provided that $m\colon \mathbb{R}\to \mathbb{C}$ has one of the above forms, and $\alpha$ is a real constant.  
\end{cor}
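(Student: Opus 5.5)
The plan is to reduce everything to the preceding corollary on the solutions over $D^{\ast}$ and then apply regularity to the multiplicative function $m$. From that corollary we already know that
\[
\alpha(x)=\alpha\,(m(x)-1), \qquad f(x)=\alpha^{2}(m(x)-1) \qquad (x\in D^{\ast})
\]
for some multiplicative $m$ and complex constant $\alpha$. The logarithmic part has already dropped out there, because substituting $y=0$ forces $\gamma=0$. So no further analysis of the equation itself is needed. What remains is to determine which multiplicative functions $m$ can occur, given the regularity hypotheses that are implicitly inherited from the previous regular corollary (continuity, continuity at a point, measurability, or local boundedness on $D^{\ast}\setminus\{0\}$).

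First I will dispose of the degenerate case $\alpha=0$. Then $\alpha\equiv 0$ and $f\equiv 0$, and the stated form holds with any choice of $m$, for instance $m\equiv 0$. If $\alpha\neq 0$, I will write $m(x)=\alpha(x)/\alpha+1$. This transfers the regularity assumption from $\alpha$ to $m$ on $D^{\ast}\setminus\{0\}\in\{\mathbb{R}^{\times},]0,+\infty[\}$.

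Next I apply the classical description of regular multiplicative functions on these groups, exactly as in the regular corollary on $D$. On $]0,+\infty[$, writing $m(x)=M(\ln x)$ turns $m$ into a regular exponential on $(\mathbb{R},+)$, so $m\equiv 0$ or $m(x)=x^{\mu}$. On $\mathbb{R}^{\times}$, we have $m(-1)^{2}=m(1)=1$ when $m\not\equiv 0$, which yields the two forms $|x|^{\mu}$ and $\operatorname{sign}(x)|x|^{\mu}$.

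The only genuinely new point is the behaviour at $0$, which is where nonnegativity of $\mu$ must come from. Since $m$ is multiplicative on $D^{\ast}$ including $0$, we have $m(0)=m(0)m(x)$ for all $x$. Hence either $m(0)=0$, or $m\equiv 1$ (which corresponds to $\mu=0$). If $m(0)=0$, then $m(0)=m(0\cdot x)$ is consistent for all exponents, but $m$ must be finite at $0$. Any choice of $m(0)$ is then forced, and we use the convention $0^{\mu}=0$ for $\mu>0$. The constraint $\mu\ge 0$ arises from requiring the formula $|x|^{\mu}$ to extend to $x=0$ as a multiplicative value, so negative $\mu$ is excluded because $|0|^{\mu}$ is undefined.

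The main obstacle is this step. A negative $\mu$ with the ad hoc value $m(0)=0$ is still algebraically multiplicative, so I would verify whether the paper's regularity conditions are imposed on all of $D^{\ast}$. If they are, boundedness or continuity near $0$ excludes $\mu<0$. The converse direction will be a direct substitution, as in Theorem \ref{thm_exp_sol}.
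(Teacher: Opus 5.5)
Your overall route is the one the paper intends; its whole proof is the remark that the regular solutions follow easily from the preceding corollary. You take $\alpha(x)=c\,(m(x)-1)$, $f(x)=c^{2}(m(x)-1)$ (I write $c$ for the constant to avoid the clash with the function $\alpha$). You transfer regularity to $m=\alpha(\cdot)/c+1$ when $c\neq 0$, and you use the classical description of regular multiplicative functions on $\mathbb{R}^{\times}$ and $]0,+\infty[$. You are also right that the statement as printed contains no regularity hypothesis, so one must be imported.

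The genuine gap is the claim $\mu\ge 0$. Your argument that negative $\mu$ is excluded because $|0|^{\mu}$ is undefined is not valid. The value $m(0)$ is not produced by the formula; it is a separate value, which multiplicativity forces to be $0$ unless $m\equiv 1$, and $m(0)=0$ is compatible with every real $\mu$. Concretely, take $m(x)=|x|^{-1}$ for $x\neq 0$, $m(0)=0$, and $\alpha=f=m-1$. Then $f(xy)-f(x)-f(y)=(m(x)-1)(m(y)-1)=\alpha(x)\alpha(y)$ for all $x,y\in\mathbb{R}$, and $f,\alpha$ are real-valued and continuous on $\mathbb{R}^{\times}$. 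Your proposed repair, imposing the regularity conditions on $D^{\ast}$, does not suffice for the hypotheses you inherit. Continuity at one point, measurability on a set of positive measure, or boundedness on a nonempty open set can all hold away from $0$, and then $\mu<0$ survives, as this example shows.

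What the proof needs is an assumption at the point $0$ itself, used explicitly. If $\alpha$ is bounded on a neighbourhood of $0$ and $c\neq 0$, then $m$ is bounded there. This gives regularity of $m$ on $D^{\ast}\setminus\{0\}$ and excludes $|x|^{\mu}$ and $\operatorname{sign}(x)|x|^{\mu}$ with $\mu<0$. If $\alpha$ is continuous on $D^{\ast}$, continuity of $m$ at $0$ even forces $\mu>0$ with $m(0)=0$, unless $m\equiv 0$ or $m\equiv 1$. Without such an assumption the conclusion $\mu\ge 0$ is false, and the paper's one-line justification does not address this either.

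There is also a smaller inaccuracy. The preceding corollary gives the representation only on $D^{\ast}\setminus\{0\}$, so the value at $0$ must be recovered from $y=0$, which gives $f=-\alpha(0)\alpha$ and $f(0)=-\alpha(0)^{2}$. In your degenerate case ($c=0$, or $m\equiv 1$ off $0$), this leaves the solutions $\alpha=k\chi_{\{0\}}$, $f=-k^{2}\chi_{\{0\}}$ with arbitrary $k$. These are not represented by $m\equiv 0$, as you suggest. They are represented only by $m=1$ on $D^{\ast}\setminus\{0\}$, $m(0)=0$, i.e.\ $|x|^{0}$ with the paper's convention $0^{0}=0$, and they disappear only under continuity at $0$.
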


\subsection{Exponential Cauchy differences}

\begin{prop}\label{prop4}
 Let $(G, \cdot)$ be a semigroup and $(\mathbb{F}, +, \cdot)$ be a field. 
 If the functions $f, \alpha \colon G\to \mathbb{F}$ satisfy 
 \begin{equation}\label{exp_product}
 f(xy)-f(x)f(y)= \alpha(xy)  
 \end{equation}
for all $x, y\in G$, then there exists an exponential function $m\colon G\to \mathbb{F}$ such that 
\[
 f(x)= f(1)\cdot m(x)
 \quad 
 \text{and}
 \quad 
 \alpha(x)= f(1)(1-f(1))\cdot m(x)
\]
for all $x\in G$. 
\end{prop}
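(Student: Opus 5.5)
The plan mirrors the proof of Proposition \ref{prop2}: substitute the neutral element. The statement uses $f(1)$, so $G$ must implicitly have a unit element $1$. I will assume $(G,\cdot)$ is a monoid, or a group, as in the other results of this section.

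\textbf{Step 1.} Set $y=1$ in \eqref{exp_product}. This gives
\[
 f(x)-f(x)f(1)=\alpha(x) \qquad (x\in G),
\]
that is, $\alpha(x)=(1-f(1))f(x)$. Write $c=f(1)$.

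\textbf{Step 2.} Substitute $\alpha=(1-c)f$ back into \eqref{exp_product}. This yields
\[
 f(xy)-f(x)f(y)=(1-c)f(xy),
\]
hence
\[
 c\,f(xy)=f(x)f(y)\qquad (x,y\in G).
\]

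\textbf{Step 3: case $c\neq 0$.} Define $m=f/c$. Dividing the identity of Step 2 by $c^{2}$ gives $m(xy)=m(x)m(y)$, so $m$ is exponential. Then $f=c\,m$ and $\alpha=(1-c)c\,m$, which is exactly the claimed form with $c=f(1)$.

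\textbf{Step 3: case $c=0$.} Here Step 2 gives $f(x)f(y)=0$ for all $x,y$. Taking $y=x$ gives $f(x)^{2}=0$, so $f\equiv 0$ because $\mathbb{F}$ is a field. Step 1 then gives $\alpha=(1-0)f\equiv 0$. Both formulas in the statement hold with $f(1)=0$ and any exponential $m$, for instance $m\equiv 1$. This settles all cases.

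The only delicate point is the existence of the identity $1$ in $G$. For a bare semigroup the expression $f(1)$ has no meaning, so I would state the hypothesis explicitly as a monoid. With that assumption the argument is purely algebraic and needs no further obstacle-handling.

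A converse check is immediate and worth including: if $f=c\,m$ and $\alpha=c(1-c)m$, then
\[
 f(xy)-f(x)f(y)=c\,m(xy)-c^{2}m(x)m(y)=(c-c^{2})m(xy)=\alpha(xy).
\]
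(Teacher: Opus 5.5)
Your proof is correct and is essentially the paper's argument. The paper sets $\tilde f=f-\alpha$ and uses $y=1$ to get $\tilde f=f(1)f$, which is equivalent to your $\alpha=(1-f(1))f$; it then derives $f(1)f(xy)=f(x)f(y)$ and splits on whether $f(1)=0$ exactly as you do. Your remark that $G$ must be a monoid for $f(1)$ to make sense is a fair correction to the stated hypothesis.
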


\begin{proof}
 Consider the function $\tilde{f}\colon G\to \mathbb{F}$ defined by 
 \[
  \tilde{f}(x)= f(x)- \alpha(x) 
  \qquad 
  \left(x\in G\right)
 \]
to deduce that equation \eqref{exp_product} can be written as 
\[
 \tilde{f}(xy)= f(x)f(y) 
 \qquad 
 \left(x, y\in G\right)
\]
With $y=1$, 
\[
 \tilde{f}(x)= f(1)f(x)
\]
follows for all $x\in G$. Therefore 
\[
 f(1)f(xy)= f(x)f(y)
\]
holds for all $x, y\in G$. If $f(1)=0$, then
\[
 f(x)f(y)=0
\]
should hold for all $x, y\in G$. So $f$ is identically zero. If however, $f(1)\neq 0$, then the above identity implies that 
\[
 \frac{f(xy)}{f(1)}= \frac{f(x)}{f(1)}\cdot \frac{f(y)}{f(1)}
\]
holds for all $x, y\in G$. In other words, there exists an exponential function $m\colon G\to \mathbb{F}$ such that 
\[
 f(x)= f(1)m(x) 
 \qquad 
 \left(x\in G\right). 
\]
Note that this representation is also valid when $f(1)=0$. 

Further, from one hand 
\[
 \tilde{f}(x)= f(1)f(x)= f(1)(f(1)m(x))= f(1)^{2}m(x) 
 \qquad 
 \left(x\in G\right). 
\]
On the other hand 
\[
 \tilde{f}(x)= f(x)-\alpha(x)= f(1)m(x)-\alpha(x)
 \qquad 
 \left(x\in G\right). 
\]
Therefore 
\[
 f(1)m(x)-\alpha(x)= f(1)^{2}m(x) 
\]
should hold for all $x\in G$, from which 
\[
 \alpha(x)= (f(1)-f(1)^{2})m(x)
\]
follows for all $x\in G$. 
\end{proof}

In the corollaries below let $\mathbb{K}\in \left\{ \mathbb{R}, \mathbb{C}\right\}$. 

\begin{cor}\label{cor11}
  If the functions $f, \alpha \colon \mathbb{K}^{\times}\to \mathbb{K}$ satisfy 
 \begin{equation}\label{eq_cor11}
 f(xy)-f(x)f(y)= \alpha(xy)  
 \end{equation}
for all $x, y\in \mathbb{K}^{\times}$, then there exists an exponential function $m\colon \mathbb{K}^{\times}\to \mathbb{F}$, i.e., a function for which we have $m(xy)=m(x)m(y)$ for all $x, y\in \mathbb{K}^{\times}$ such that 
\[
 f(x)= f(1)\cdot m(x)
 \quad 
 \text{and}
 \quad 
 \alpha(x)= f(1)(1-f(1))\cdot m(x)
\]
for all $x\in \mathbb{K}^{\times}$. 
\end{cor}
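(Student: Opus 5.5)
This is a direct specialization of Proposition \ref{prop4}. I will take $(G,\cdot)=(\mathbb{K}^{\times},\cdot)$ and $(\mathbb{F},+,\cdot)=(\mathbb{K},+,\cdot)$ with $\mathbb{K}\in\{\mathbb{R},\mathbb{C}\}$. The set $\mathbb{K}^{\times}$ is closed under multiplication and contains the neutral element $1$, so it is a (commutative) semigroup, in fact a group. The field $\mathbb{K}$ is a field. Equation \eqref{eq_cor11} is literally equation \eqref{exp_product} for functions $f,\alpha\colon\mathbb{K}^{\times}\to\mathbb{K}$. Hence all hypotheses of Proposition \ref{prop4} hold.

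Invoking Proposition \ref{prop4} then gives an exponential function $m\colon\mathbb{K}^{\times}\to\mathbb{K}$, that is, $m(xy)=m(x)m(y)$ for all $x,y\in\mathbb{K}^{\times}$, such that $f(x)=f(1)m(x)$ and $\alpha(x)=f(1)(1-f(1))m(x)$ for every $x\in\mathbb{K}^{\times}$. In the statement the codomain of $m$ is written as $\mathbb{F}$, which here is to be read as $\mathbb{F}=\mathbb{K}$.

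The only step needing care is that the proof of Proposition \ref{prop4} substitutes $y=1$. So $G$ must contain a neutral element $1$, and the point $1$ appearing in the formulas must be the multiplicative unit of $\mathbb{K}$. Both hold for $\mathbb{K}^{\times}$, since the neutral element of $(\mathbb{K}^{\times},\cdot)$ is the number $1$. For completeness I will briefly recall the chain of that argument: set $\tilde f=f-\alpha$; putting $y=1$ gives $f(1)f(xy)=f(x)f(y)$; then split into the cases $f(1)=0$, where $f\equiv0$ and we may take $m$ arbitrary, e.g.\ $m\equiv1$, and $f(1)\neq0$, where $m=f/f(1)$; finally read off $\alpha$. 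There is no real obstacle beyond this verification.
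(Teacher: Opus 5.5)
Your proposal is correct and matches the paper, which gives no separate proof and treats this corollary as the immediate specialization of Proposition~\ref{prop4} with $(G,\cdot)=(\mathbb{K}^{\times},\cdot)$ and $\mathbb{F}=\mathbb{K}$. Your two remarks are fair clarifications of points the paper leaves implicit: the substitution $y=1$ requires a neutral element, which $\mathbb{K}^{\times}$ has, and the codomain $\mathbb{F}$ of $m$ should be read as $\mathbb{K}$.
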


\begin{cor}\label{cor12}
  If the functions $f, \alpha \colon \mathbb{K}\to \mathbb{K}$ satisfy 
 \begin{equation}\label{eq_cor12}
 f(x+y)-f(x)f(y)= \alpha(x+y)  
 \end{equation}
for all $x, y\in \mathbb{K}$, then there exists an exponential function $m\colon \mathbb{K}\to \mathbb{F}$, i.e., a function for which we have $m(x+y)=m(x)m(y)$ for all $x, y\in \mathbb{K}$ such that 
\[
 f(x)= f(1)\cdot m(x)
 \quad 
 \text{and}
 \quad 
 \alpha(x)= f(1)(1-f(1))\cdot m(x)
\]
for all $x\in \mathbb{K}$. 
\end{cor}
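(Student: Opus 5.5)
The plan is to obtain Corollary~\ref{cor12} as a direct specialization of Proposition~\ref{prop4}, taking the abstract semigroup there to be $(G,\cdot)=(\mathbb{K},+)$ and the field to be $\mathbb{F}=\mathbb{K}$. Under this choice the product $xy$ in \eqref{exp_product} becomes $x+y$, so \eqref{exp_product} turns into \eqref{eq_cor12}. An ``exponential function'' on $(\mathbb{K},+)$ is then exactly a map with $m(x+y)=m(x)m(y)$.

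The one point needing care is the role of the element $1$ in Proposition~\ref{prop4}. Its proof substitutes $y=1$, so $1$ there means the neutral element of the semigroup, and the proposition tacitly assumes $G$ is a monoid. In $(\mathbb{K},+)$ the neutral element is $0$, so I will read ``$f(1)$'' in the corollary as the value of $f$ at the neutral element of $(\mathbb{K},+)$, namely $f(0)$. This reading is forced. If $1$ meant the real number one, then for $f=c\,m$ with $m(1)\neq 1$ the function $f/f(1)$ would not be exponential, so the literal claim would fail.

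With that reading, I will re-run the three steps of Proposition~\ref{prop4} explicitly for completeness:
\begin{enumerate}[(1)]
\item Put $\tilde f=f-\alpha$, so that \eqref{eq_cor12} reads $\tilde f(x+y)=f(x)f(y)$.
\item Substitute $y=0$ to get $\tilde f=f(0)f$, and hence $f(0)f(x+y)=f(x)f(y)$.
\item If $f(0)=0$, then $f\equiv 0$ and any exponential $m$ works. Otherwise $m=f/f(0)$ is exponential on $(\mathbb{K},+)$. In both cases $\alpha=f-\tilde f=(f(0)-f(0)^2)\,m$.
\end{enumerate}

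There is no real obstacle. The only thing to get right is identifying the neutral element in the notation, and I will state explicitly that the constant in the corollary is $f$ evaluated at the neutral element $0$ of $(\mathbb{K},+)$.
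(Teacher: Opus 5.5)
Your proposal is correct and follows the paper's route: the paper states this corollary without a separate proof, as the specialization of Proposition~\ref{prop4} to the group $(\mathbb{K},+)$, and re-running its three steps with $y=0$ is all that is needed. Your reading of $f(1)$ as $f(0)$, the value at the neutral element, is right and in fact necessary: for $f=\exp$, $\alpha\equiv 0$ the literal statement would force $m(x)=e^{x-1}$, which is not exponential.
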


\begin{rem}
 If $\mathbb{K}= \mathbb{R}$ then with the help of the above two corollaries, we can immediately determine the regular solutions of equations \eqref{eq_cor11} and \eqref{eq_cor12} as well. 
\end{rem}

Finally, in this part of the manuscipt, we intend to study the equation
\[
 f(x\cdot y)-f(x)f(y)= \alpha(x)\alpha(y)  
\]
for functions $f, \alpha\colon G\to \mathbb{C}$ defined on a commutative group. In this case, it is again worth taking advantage of the fact that, after rearrangement, the equation can be transformed into a Levi-Civita type equation of the form 
\[
 f(x\cdot y)= f(x)f(y)+\alpha(x)\alpha(y). 
\]

\begin{thm}\label{thm_exp_difference_prod}
 Let $(G, \cdot)$ be a group and $f, \alpha \colon G\to \mathbb{C}$ be functions for which 
 \begin{equation}\label{exp_difference_prod}
  f(x\cdot y)= f(x)f(y)+\alpha(x)\alpha(y) 
 \end{equation}
 holds for all $x, y\in G$. Then there exist and additive function $a\colon G\to \mathbb{C}$, exponential functions $m, m_{1}, m_{2}\colon G\to \mathbb{C}$ such that the system $\{ m_{1}, m_{2} \}$ is linearly independent, further, there are complex constants $\alpha_{1}, \alpha_{2}, \gamma_{1}, \gamma_{2}$ and $\gamma\neq 0$, such that the pair $(f, \alpha)$ has one of the following forms. 
 \begin{enumerate}[(i)]
  \item 
  \begin{align*}
   f(x)&= (\gamma_{1} a(x)+\gamma_{2})\cdot m(x)\\
   \alpha(x)&= (\alpha_{1} a(x)+\alpha_{2})\cdot m(x)
   \qquad (x\in G), 
  \end{align*}
where the constants $\alpha_{1}, \alpha_2, \gamma_{1}$ and $\gamma_{2}$ satisfy the system of equation
\begin{align*}
 \gamma_{1}^{2}+\alpha_{1}^2&=0\\
 \gamma_{2}^{2}+\alpha_{2}^2&=\gamma_{2}\\
 \gamma_{1}\gamma_{2}+\alpha_{1}\alpha_{2}&= \gamma_{1}. 
\end{align*}
\item \begin{align*}
   f(x)&= \gamma_{1}\cdot m_{1}(x)+\gamma_{2}\cdot m_{2}(x)\\
   \alpha(x)&= \alpha_{1}\cdot m_{1}(x)+\alpha_{2}\cdot m_{2}(x)
   \qquad (x\in G), 
  \end{align*}
where the constants $\alpha_{1}, \alpha_2, \gamma_{1}$ and $\gamma_{2}$ satisfy the system of equation
\begin{align*}
 \gamma_{1}^{2}+\alpha_{1}^2&=\gamma_{1}\\
 \gamma_{2}^{2}+\alpha_{2}^2&=\gamma_{2}\\
 \gamma_{1}\gamma_{2}+\alpha_{1}\alpha_{2}&= 0. 
 \end{align*}
 \item either $f$ and $\alpha$ are identically zero, or 
  \begin{align*}
 f(x)&= \gamma \cdot m(x)\\
 \alpha(x)&= \pm\sqrt{\gamma(1-\gamma)}\cdot m(x)
 \qquad 
 \left(x\in G\right). 
 \end{align*}
 \end{enumerate}
\end{thm}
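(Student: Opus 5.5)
The plan follows the scheme of Theorem \ref{thm_exp_sol}. The equation \eqref{exp_difference_prod} says that every translate $\tau_y f = f(y)\,f + \alpha(y)\,\alpha$ lies in $\mathrm{lin}\{f,\alpha\}$. Hence, by the result of Székelyhidi recalled in the prerequisites, $f$ is an exponential polynomial of degree at most $2$. When $\{f,\alpha\}$ is linearly independent, the function $\alpha$ is a linear combination of translates of $f$, so it is an exponential polynomial built from the same additive and exponential functions. I would split the proof according to $\mathrm{rank}\{f,\alpha\}\in\{0,1,2\}$.

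\textbf{Rank $0$ or $1$.} If the rank is $0$, then $f\equiv\alpha\equiv 0$. If the rank is $1$, there is a nonzero function $\varphi$ with $f=\gamma\varphi$ and $\alpha=\beta\varphi$, where $(\gamma,\beta)\neq(0,0)$. The equation then becomes $\gamma\varphi(xy)=(\gamma^2+\beta^2)\varphi(x)\varphi(y)$.
\begin{itemize}
\item If $\gamma\neq0$ and $\gamma^2+\beta^2\neq0$, then, after normalizing so that $\gamma^2+\beta^2=\gamma$, the function $\varphi$ is an exponential $m$. This gives $f=\gamma m$ and $\beta^2=\gamma(1-\gamma)$, i.e.\ case (iii).
\item If $\gamma\neq 0$ and $\gamma^2+\beta^2=0$, then $\varphi(xy)=0$ for all $x,y$, so $\varphi\equiv0$ on the group, a contradiction.
\item If $\gamma=0$, then $f\equiv0$ and $\alpha(x)\alpha(y)=0$, so $\alpha\equiv 0$.
\end{itemize}
A degenerate subcase such as $f\equiv 0$ with $\alpha\neq 0$ is therefore excluded.

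\textbf{Rank $2$.} Here $f$ has a two-dimensional variety, and I distinguish two subcases.
\begin{itemize}
\item If the variety contains a single exponential $m$, then $f=(\gamma_1a+\gamma_2)m$ and $\alpha=(\alpha_1a+\alpha_2)m$ with $a$ additive.
\item If it contains two linearly independent exponentials $m_1,m_2$, then $f=\gamma_1m_1+\gamma_2m_2$ and $\alpha=\alpha_1m_1+\alpha_2m_2$.
\end{itemize}
In each subcase I substitute into \eqref{exp_difference_prod}.

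In the first subcase, using $a(xy)=a(x)+a(y)$ and $m(xy)=m(x)m(y)$ and dividing by $m(x)m(y)$, the identity $(\gamma_1(a(x)+a(y))+\gamma_2) = (\gamma_1a(x)+\gamma_2)(\gamma_1a(y)+\gamma_2)+(\alpha_1a(x)+\alpha_2)(\alpha_1a(y)+\alpha_2)$ must hold. Comparing the coefficients of $a(x)a(y)$, of $a(x)$ (symmetrically $a(y)$) and of $1$ gives exactly the system in (i). The independence of $1$, $a(x)$, $a(y)$ and $a(x)a(y)$ as functions of $(x,y)$ is justified by Lemma \ref{L_lin_dep} when $a\neq 0$. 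If $a\equiv0$, the rank would drop, so this is harmless.

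In the second subcase, the independence of the products $m_i(x)m_j(y)$, which follows from the independence of $m_1$ and $m_2$ (distinct exponentials are linearly independent), gives the coefficient equations for $m_1(x)m_1(y)$, $m_2(x)m_2(y)$ and the mixed terms. These are exactly the system in (ii).

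\textbf{Main obstacle.} The delicate point is justifying that no other shapes occur in the rank-$2$ case: for instance that $\alpha$ involves the same $m$ (or the same $m_1,m_2$) as $f$, and that the polynomial part has degree at most $1$ with a single additive function. I would argue this via the dimension count. The variety of $f$ is contained in $\mathrm{lin}\{f,\alpha\}$, which has dimension $2$, so it equals that space. A $2$-dimensional translation-invariant space of exponential polynomials is either $\mathrm{lin}\{m,am\}$ with $a\neq 0$, or $\mathrm{lin}\{m_1,m_2\}$ (again Székelyhidi's structure theorem). Since $\alpha$ lies in this space, the stated forms follow, and the converse direction is a direct verification.
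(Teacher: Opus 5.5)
Your proposal is correct and follows the paper's proof: every translate of $f$ lies in $\mathrm{lin}\{f,\alpha\}$, you split on the rank of $\{f,\alpha\}$, you use the structure of two-dimensional varieties ($\mathrm{lin}\{m,am\}$ or $\mathrm{lin}\{m_1,m_2\}$), and you compare coefficients to obtain the systems in (i) and (ii). Your rank $\le 1$ case is more careful than the paper's, which simply asserts $f=\gamma m$, $\alpha=\alpha m$; you show the common function is a genuine exponential with $\gamma\neq 0$ and rule out $f\equiv 0\neq\alpha$.
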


\begin{proof}
 Observe that equation \eqref{exp_difference_prod} expresses that all the translates of the function 
 $f\colon G\to \mathbb{C}$ are contained in the linear space $\mathrm{lin}(\left\{ f, \alpha\right\})$. 
 
 If $\mathrm{rank}(\left\{ f, \alpha\right\})=2$, then this yields that $f$ and $\alpha$ are exponential polynomials of degree one. Thus they are either of the form 
 \begin{align*}
  f(x)&= (\gamma_{1}a(x)+\gamma_{2})\cdot m(x)\\
  \alpha(x)&= (\alpha_{1}a(x)+\alpha_{2})\cdot m(x)
  \qquad 
  \left(x\in G\right), 
 \end{align*}
 or they are of the form 
 \begin{align*}
  f(x)&= \gamma_{1}m_{1}(x)+\gamma_{2}m_{2}(x)\\
  \alpha(x)&= \alpha_{1}m_{1}(x)+\alpha_{2}m_{2}(x)
  \qquad 
  \left(x\in G\right), 
 \end{align*}
where $a\colon G\to \mathbb{C}$ is an additive function, $m, m_{1}, m_{2}\colon G\to \mathbb{C}$ are exponentials so that $\{ m_{1}, m_{2}\}$ is linearly independent and $\alpha_{1}, \alpha_{2}, \gamma_{1}, \gamma_{2}$ are complex constants. Inserting these representations back to equation \eqref{exp_difference_prod} we obtain in the first case 
\begin{align*}
 \gamma_{1}^{2}+\alpha_{1}^2&=0\\
 \gamma_{2}^{2}+\alpha_{2}^2&=\gamma_{2}\\
 \gamma_{1}\gamma_{2}+\alpha_{1}\alpha_{2}&= \gamma_{1}, 
\end{align*} 
and the system of equations 
\begin{align*}
 \gamma_{1}^{2}+\alpha_{1}^2&=\gamma_{1}\\
 \gamma_{2}^{2}+\alpha_{2}^2&=\gamma_{2}\\
 \gamma_{1}\gamma_{2}+\alpha_{1}\alpha_{2}&= 0. 
 \end{align*}
 follows in the second case. These are  cases (i) and (ii) of the statement. 
 
 Finally, if $\mathrm{rank}(\{ f, \alpha \})<2$, then we have 
 \begin{align*}
  f(x)&= \gamma m(x)\\
  \alpha(x)&= \alpha m(x) 
  \qquad 
  \left(x\in G\right), 
 \end{align*}
with an appropriate exponential $m\colon G\to \mathbb{C}$ and complex constants $\alpha$ and $\gamma$. Using these forms, equation \eqref{exp_difference_prod} implies that 
\[
 \gamma = \gamma^2+\alpha^2, 
\]
So $\alpha= \pm\sqrt{\gamma(1-\gamma)}$. 
\end{proof}

 \begin{rem}
 It is sufficient to determine those solutions of the systems of equations appearing in the previous theorem for which there correspond nondegenerate pairs of solutions $(f, \alpha)$. Indeed, if for example the coefficients $\alpha_{1}$ and $\gamma_{1}$ were zero in (i) or in (ii), then the degree of the corresponding exponential polynomials $f$ and $\alpha$ would be one. 
  The nontrivial solutions of the first system, i.e., 
   \begin{align*}
     \gamma_{1}^{2}+\alpha_{1}^2&=0\\
     \gamma_{2}^{2}+\alpha_{2}^2&=\gamma_{2}\\
     \gamma_{1}\gamma_{2}+\alpha_{1}\alpha_{2}&= \gamma_{1}
    \end{align*}
  are 
    \[
     \alpha_{1} \text{ is arbitrary} 
     \qquad 
     \alpha_{2}= 0
     \qquad 
     \gamma_{1}= \pm \alpha_{1}i
     \qquad 
     \gamma_{2}= 1. 
    \]
  Accordingly, the solutions in case (i) are of the form 
 \begin{align*}
  f(x)&= \left(\pm \alpha_{1} i a(x)+1\right)\cdot m(x)\\
  \alpha(x)&= \alpha_{1}a(x)\cdot m(x)  
 \end{align*}
 Similarly, the nontrivial solutions of the second system, i.e., 
\begin{align*}
 \gamma_{1}^{2}+\alpha_{1}^2&=\gamma_{1}\\
 \gamma_{2}^{2}+\alpha_{2}^2&=\gamma_{2}\\
 \gamma_{1}\gamma_{2}+\alpha_{1}\alpha_{2}&= 0. 
 \end{align*}
 are 
 \[
     \alpha_{1} \text{ is arbitrary} 
     \qquad 
     \alpha_{2}= -\alpha_{1}
     \qquad 
     \gamma_{1}= \frac{1\pm \sqrt{1-4\alpha_{1}^2}}{2}
     \qquad 
     \gamma_{2}= \frac{1\mp \sqrt{1-4\alpha_{1}^2}}{2}. 
    \]
 Here we apply the convention that if we choose the $+$ sign for $\gamma_{1}$, then we must choose the $-$ sign for $\gamma_{2}$, and vice versa. 
Accordingly, in case (ii) the solutions are
\begin{align*}
 f(x)&= \frac{1\pm \sqrt{1-4\alpha_{1}^2}}{2} m_{1}(x) + \frac{1\mp \sqrt{1-4\alpha_{1}^2}}{2} m_{2}(x)\\
 \alpha(x)&=\alpha_{1}m_{1}(x)-\alpha_{1}m_{2}(x)\\
 &= \alpha_{1}\cdot (m_{1}(x)-m_{2}(x))
\end{align*}
\end{rem}

\begin{rem}[Real-valued solutions of equation \eqref{exp_difference_prod}]
With the help of Theorem \ref{thm_exp_difference_prod}, we can easily obtain the real-valued solutions of equation \eqref{exp_difference_prod} as well. Note that these may be significantly fewer than the complex-valued solutions. Indeed, the functions 
\begin{align*}
  f(x)&= \left(\pm \alpha_{1} i a(x)+1\right)\cdot m(x)\\
  \alpha(x)&= \alpha_{1}a(x)\cdot m(x)  
 \end{align*}
can both be real-valued only if $\alpha_1 = 0$, that is, if $\alpha$ is identically zero and $f$ is exponential.
Similarly, the functions 
 \begin{align*}
 f(x)&= \frac{1\pm \sqrt{1-4\alpha_{1}^2}}{2} m_{1}(x) + \frac{1\mp \sqrt{1-4\alpha_{1}^2}}{2} m_{2}(x)\\
 \alpha(x)&= \alpha_{1}(m_{1}(x)-m_{2}(x))
\end{align*}
are real-valued if and only if $\alpha_{1}\in \left[-\frac{1}{2}, \frac{1}{2}\right]$. 

Finally, the functions 
\begin{align*}
 f(x)&= \gamma \cdot m(x)\\
 \alpha(x)&= \pm\sqrt{\gamma(1-\gamma)}\cdot m(x)
 \qquad 
 \left(x\in G\right)
 \end{align*}
 are real-valued if and only if $\gamma \in [0, 1]$. 
 
 Summing up, if the functions $f, \alpha \colon G\to \mathbb{R}$ solve equation \eqref{exp_difference_prod}, then they are one of the following form
 \begin{enumerate}[(a)]
  \item \begin{align*}
 f(x)&= \frac{1\pm \sqrt{1-4\alpha^2}}{2} m_{1}(x) + \frac{1\mp \sqrt{1-4\alpha^2}}{2} m_{2}(x)\\
 \alpha(x)&= \alpha(m_{1}(x)-m_{2}(x)) 
 \qquad 
 \left(x\in G\right), 
\end{align*}
where $m_{1}, m_{2}\colon G\to \mathbb{R}$ are exponentials and $\alpha \in \left[-\frac{1}{2}, \frac{1}{2}\right]$, 
\item   \begin{align*}
 f(x)&= \gamma \cdot m(x)\\
 \alpha(x)&= \pm\sqrt{\gamma(1-\gamma)}\cdot m(x)
 \qquad 
 \left(x\in G\right)
 \end{align*}
where $\gamma \in [0, 1]$. 
\end{enumerate}
\end{rem}

\begin{opp}
If $\mathbb{F}$ and $\mathbb{K}$ are fields, and $f, \alpha\colon \mathbb{F}\to \mathbb{K}$ are functions, then the equation 
\[
 f(x+y)-f(x)f(y)= \alpha(xy)   
 \qquad 
 \left(x, y\in \mathbb{F}\right)
\]
as well as 
\[
 f(xy)-f(x)f(y)= \alpha(x+y)  
 \qquad 
 \left(x, y\in \mathbb{F}\right)
\]
also makes sense, however, we cannot solve them using the methods presented in the ma\-nu\-script. Their investigation may be addressed in the future, in a separate manuscript, likely employing entirely different methods. 
\end{opp}

For easier overview, we summarize the results of our paper in the table below.

\begin{tabular}{|p{5cm}|c|c|}
\hline 
 Function(s) & Equation & Statement \\ \hline \hline
$f\colon S\to \mathbb{F}, B\colon S\times S\to \mathbb{F}$ symmetric and biadditive  & $f(x+y)-f(x)-f(y)= B(x, y)$  & Proposition \ref{prop_biadditive} \\ \hline
$f\colon S\to \mathbb{F}$ & $f(x+y)-f(x)-f(y)= \alpha xy$& Corollary \ref{cor1} \\ \hline
$f\colon S\to \mathbb{F}$ & $f(xy)-f(x)-f(y)= \alpha xy$& Corollary \ref{lem_exp_difference}\\ \hline
$f\colon \mathbb{R}\to \mathbb{R}$ & $f(x+y)-f(x)-f(y)= \alpha xy$& Corollary \ref{cor_add}\\ \hline
$f\colon D\to \mathbb{R}$ & $f(xy)-f(x)-f(y)= \alpha xy$& Corollary \ref{cor_exp_positive}\\ \hline
$f\colon \mathbb{R}\to \mathbb{R}$ & $f(xy)-f(x)-f(y)= \alpha xy$& Corollary \ref{cor_exp_real} \\ \hline
$f, \alpha \colon S\to \mathbb{F}$, $(S, \cdot)\leq (\mathbb{F}^{\times}, \cdot)$ & $f(xy)-f(x)-f(y)= \alpha(xy)$& Proposition \ref{prop2} \\ \hline
$f, \alpha \colon \mathbb{F}\to \mathbb{F}$ & $f(x+y)-f(x)-f(y)= \alpha(xy)$& Proposition \ref{prop3} \\ \hline 
$f, \alpha \colon \mathbb{G}\to \mathbb{C}$ & $f(x\cdot y)-f(x)-f(y)= \alpha(x)(y)$& Theorem 
\ref{thm_exp_sol} 
\\ \hline \hline 
$f, \alpha \colon G\to \mathbb{C}$ & $f(xy)-f(x)f(y)= \alpha(xy)$& Proposition \ref{prop4} \\ \hline
$f, \alpha \colon \mathbb{K}^{\times}\to \mathbb{K}$ & $f(xy)-f(x)f(y)= \alpha(xy)$ & Corollary \ref{cor11} \\ \hline
$f, \alpha \colon \mathbb{K}\to \mathbb{K}$ & $f(x+y)-f(x)f(y)= \alpha(x+y)$ & Corollary \ref{cor12} \\ \hline
$f, \alpha \colon G\to \mathbb{C}$ & $f(x\cdot y)-f(x)f(y)= \alpha(x)\alpha(y)$& Theorem \ref{thm_exp_difference_prod} \\ \hline

\end{tabular}

\begin{ackn}
The research of E.~Gselmann  was supported by the János Bolyai Research Scholarship of the Hungarian Academy of Sciences. 
\end{ackn}


\noindent
\textbf{Eszter Gselmann}\\
Department of Analysis\\
University of Debrecen\\
H-4002 Debrecen, P.O.Box 400\\
Hungary\\
E-mail: \href{mailto:gselmann@science.unideb.hu}{gselmann@science.unideb.hu}\\
ORCID: \href{https://orcid.org/0000-0002-1708-2570}{0000-0002-1708-2570}

\begin{multicols}{2}
\vspace{1cm}
\noindent
\textbf{Tomasz Małolepszy}\\
Institute of Mathematics\\
University of Zielona Góra\\
65-516 Zielona Góra, ul. prof.~Z.~Szafrana 4a\\
Poland\\
E-mail: \href{mailto:t.malolepszy@im.uz.zgora.pl}{t.malolepszy@im.uz.zgora.pl}

\vspace{1cm}
\noindent
\textbf{Janusz Matkowski}\\
Institute of Mathematics\\
University of Zielona Góra\\
65-516 Zielona Góra, ul. prof.~Z.~Szafrana 4a\\
Poland\\
E-mail: \href{mailto:j.matkowski@im.uz.zgora.pl}{j.matkowski@im.uz.zgora.pl}
\end{multicols}

\end{document}